% This is in AMSLaTeX.
% On NeXT and darkwing, use LaTeX.
\documentclass[11pt]{amsart}
\usepackage{amssymb}
\usepackage{amscd}
\usepackage[all]{xy}
\usepackage[pagebackref,colorlinks,linkcolor=red,citecolor=blue,urlcolor=blue,hypertexnames=true]{hyperref}

\numberwithin{equation}{section}

\def\today{\number\day\space\ifcase\month\or   January\or February\or
   March\or April\or May\or June\or   July\or August\or September\or
   October\or November\or December\fi\   \number\year}

\newcounter{TmpEnumi}

\theoremstyle{definition}
\newtheorem{thm}{Theorem}[section]
\newtheorem{lem}[thm]{Lemma}

\newtheorem{cor}[thm]{Corollary}

\newtheorem{rmk}[thm]{Remark}

\newtheorem{qst}[thm]{Question}

\newcommand{\beq}{\begin{equation}}
\newcommand{\eeq}{\end{equation}}
\newcommand{\beqr}{\begin{eqnarray*}}
\newcommand{\eeqr}{\end{eqnarray*}}
\newcommand{\bal}{\begin{align*}}
\newcommand{\eal}{\end{align*}}
\newcommand{\bei}{\begin{itemize}}
\newcommand{\eei}{\end{itemize}}
\newcommand{\limi}[1]{\lim_{{#1} \to \infty}}

\newcommand{\af}{\alpha}
\newcommand{\bt}{\beta}
\newcommand{\gm}{\gamma}
\newcommand{\dt}{\delta}
\newcommand{\ep}{\varepsilon}
\newcommand{\zt}{\zeta}
\newcommand{\et}{\eta}

\newcommand{\te}{\theta}
\newcommand{\ld}{\lambda}
\newcommand{\sm}{\sigma}
\newcommand{\kp}{\kappa}
\newcommand{\ph}{\varphi}
\newcommand{\ps}{\psi}
\newcommand{\rh}{\rho}
\newcommand{\om}{\omega}
\newcommand{\ta}{\tau}

\newcommand{\Gm}{\Gamma}

\newcommand{\Q}{{\mathbb{Q}}}
\newcommand{\Z}{{\mathbb{Z}}}
\newcommand{\R}{{\mathbb{R}}}
\newcommand{\C}{{\mathbb{C}}}
\newcommand{\N}{{\mathbb{Z}}_{> 0}}
\newcommand{\Nz}{{\mathbb{Z}}_{\geq 0}}
\newcommand{\T}{{\mathbb{T}}}

\pagenumbering{arabic}

\newcommand{\Tr}{{\mathrm{Tr}}}
\newcommand{\tr}{{\mathrm{tr}}}
\newcommand{\id}{{\mathrm{id}}}
\newcommand{\ev}{{\mathrm{ev}}}

\newcommand{\sa}{{\mathrm{sa}}}

\newcommand{\Prim}{{\mathrm{Prim}}}
\newcommand{\diag}{{\mathrm{diag}}}

\newcommand{\card}{{\mathrm{card}}}
\newcommand{\Aut}{{\mathrm{Aut}}}

\newcommand{\andeqn}{\quad
{\mbox{and}} \quad}

%%%%%%%%%%%%%%%%%%%%%%%%%%%%%%%%%%%%%%%%%%%%%%%%%%%%%%%%%%%%%%%%%%%%%%%%

\newcommand{\Wolog}{Without loss of generality}

\newcommand{\ca}{C*-algebra}

\newcommand{\hm}{homomorphism}

\newcommand{\ct}{continuous}
\newcommand{\cfn}{continuous function}

\newcommand{\cms}{compact metric space}

\newcommand{\mh}{minimal homeomorphism}

\newcommand{\op}{{\mathrm{op}}}

%%%%%%%%%%%%%%%%%%%%%%%%%%%%%%%%%%%%%%%%%%%%%%%%%%%%%%%%%%%%%%%%%%%%%%%%

% Temporary abbreviations:

\newcommand{\ov}{\overline}

\newcommand{\I}{\infty}

\title[Not equivariantly isomorphic to its opposite]{Simple
 nuclear C*-algebras not equivariantly isomorphic to their opposites}

\author{Marius Dadarlat}
\address{Department of Mathematics,
    Purdue University,
    150 N.\  University Street,
    West Lafayette IN 47907-2067,
USA.}

\author{Ilan Hirshberg}
\address{Department of Mathematics,
Ben-Gurion University of the Negev,
Be'er Sheva 8410501,
Israel.}

\author{N.~Christopher Phillips}
\address{Department of Mathematics,
University of Oregon,
Eugene OR 97403-1222,
USA.}

\date{14~February 2016}

\subjclass[2000]{46L35, 46L55}
\thanks{This research was supported in part
by the US-Israel Binational Science
Foundation.
This material is based upon work supported by the
  US National Science Foundation under
  Grant DMS-1101742 and  Grant DMS-1362824.}

\begin{document}

\begin{abstract}
We exhibit examples of simple separable nuclear \ca{s},
along with actions of the circle group
and outer actions of the integers,
which are not equivariantly isomorphic to their opposite algebras.
In fact,
the fixed point subalgebras are not isomorphic to their opposites.
The \ca{s} we exhibit are well behaved
from the perspective of structure and classification
of nuclear \ca{s}:
they are unital \ca{s} in the UCT class,
with finite nuclear dimension.
One is an AH-algebra with
unique tracial state
and absorbs the CAR algebra tensorially.
The other is a Kirchberg algebra.
\end{abstract}

\maketitle

Let $A$ be a \ca.
We denote by $A^{\op}$ the opposite algebra:
the same Banach space with the same involution,
but with reversed multiplication.
The question of constructing operator algebras
not isomorphic to their opposites goes back to~\cite{connes},
which constructs factors not isomorphic to their opposites.
Separable simple \ca{s} not isomorphic to their opposites
were constructed in \cite{phillips-PAMS,phillips-viola},
but those examples are not nuclear.
In the nuclear setting,
there are nonsimple \ca{s}
not isomorphic to their opposites
(\cite{PhX}; see also \cite{rosenberg} for a related discussion).
The question of whether there are simple nuclear \ca{s}
not isomorphic to their opposites
is an important and difficult open question,
particularly due to its connection
with the Elliott classification program.
The Elliott invariant,
as well as the Cuntz semigroup,
cannot distinguish a \ca{} from its opposite.
Thus,
existence of a simple separable
nuclear \ca{} not isomorphic to its opposite
would reveal an entirely new phenomenon.

In this paper,
we address the equivariant situation.
We exhibit examples of simple separable unital nuclear \ca{s}~$A$
along with outer actions of~$\Z$
and with actions of the circle group~$\T$
which are not \emph{equivariantly} isomorphic to their opposites.
The \ca{s}~$A$ are well behaved
from the perspective of structure and classification of \ca{s}.
In one set of examples,
$A$ is AH with no dimension growth,
has a unique tracial state, and tensorially absorbs the CAR algebra.
In the other, $A$ is a Kirchberg algebra
satisfying the Universal Coefficient Theorem.

In fact, in our examples
the fixed point algebras
are not isomorphic to their opposite algebras.
In particular,
we give outer actions of $\Z$ on a simple separable unital nuclear
\ca{} with tracial rank zero
and on a unital Kirchberg algebra,
both satisfying the Universal Coefficient Theorem,
such that the fixed point algebras
are not isomorphic to their opposites.

These examples illustrate some of the difficulties one would encounter
if one wished to extend the current classification results
to the equivariant setting,
for actions of both $\Z$ and~$\T$.

In the rest of the introduction,
we recall a few general facts about opposite algebras.
Section~\ref{Sec_Coh} contains some preparatory lemmas,
and Section~\ref{Sec_Ex} contains the construction of our examples.
In Section~\ref{Sec_Diff}
we collect several remarks on our construction,
outline a shorter construction which gives examples with some
of the properties of our main examples,
and state some open questions.

If $A$ is a \ca,
we denote by $A^{\#}$ its conjugate algebra.
As a real \ca{} it is the same as~$A$,
but it has the reverse complex structure.
That is,
if we denote its scalar multiplication by
$(\ld, a) \mapsto \ld \bullet_{\#} a$,
then $\lambda \bullet_{\#} a = \overline{\lambda}a$
for any $a \in A$ and any $\lambda \in \C$.
We recall the following easy fact.

\begin{lem}\label{L_5Y11_Conj}
Let $A$ be a \ca.
Then the map $a \mapsto a^*$
is an isomorphism $A^{\op} \to A^{\#}$.
\end{lem}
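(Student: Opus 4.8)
The plan is to check directly that the map $\Phi \colon A^{\op} \to A^{\#}$ given by $\Phi(a) = a^*$ is a $*$-isomorphism of \ca{s}, the only subtlety being bookkeeping: one must keep careful track of which complex structure, which multiplication, and which involution is in force on each side. Since the involution of $A$ is an additive bijection of period two, $\Phi$ is automatically an additive bijection with $\Phi^{-1} = \Phi$; and it is isometric, because $\|a^*\| = \|a\|$ in $A$ and neither of the constructions $(-)^{\op}$ nor $(-)^{\#}$ alters the underlying real Banach space or its norm. So the remaining points to verify are compatibility with the scalar multiplications, with the multiplications, and with the involutions.

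For scalars, note that forming $A^{\op}$ leaves scalar multiplication unchanged, while in $A^{\#}$ one has $\ld \bullet_{\#} b = \overline{\ld}\, b$. Hence, using conjugate-linearity of the involution of $A$, for $a \in A$ and $\ld \in \C$ we get $\Phi(\ld a) = (\ld a)^* = \overline{\ld}\, a^* = \ld \bullet_{\#} a^* = \ld \bullet_{\#} \Phi(a)$, so $\Phi$ is $\C$-linear when regarded as a map into $A^{\#}$. For multiplication, the product in $A^{\#}$ coincides with that of $A$, whereas the product in $A^{\op}$ is $a \cdot_{\op} b = ba$; so, using that the involution of $A$ reverses products, $\Phi(a \cdot_{\op} b) = (ba)^* = a^* b^* = \Phi(a)\Phi(b)$. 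For the involution, both $A^{\op}$ and $A^{\#}$ carry the original involution $*$ of $A$ (for $A^{\#}$ one checks that $*$ is still conjugate-linear with respect to $\bullet_{\#}$, which is immediate), and $\Phi$ intertwines them since $\Phi(a^*) = (a^*)^* = \Phi(a)^*$.

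There is no genuine obstacle here; the only thing requiring attention is the observation that the single fact that the involution of $A$ is simultaneously conjugate-linear and anti-multiplicative is doing all the work—conjugate-linearity is exactly what turns $\Phi$ into a $\C$-linear map \emph{into} $A^{\#}$, and anti-multiplicativity is exactly what turns $\Phi$ into a multiplicative map \emph{out of} $A^{\op}$. Combining these three compatibilities with the automatic additivity, bijectivity, and isometry gives that $\Phi$ is an isomorphism $A^{\op} \to A^{\#}$, as claimed.
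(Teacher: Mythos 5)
Your proof is correct. The paper states Lemma~\ref{L_5Y11_Conj} as an ``easy fact'' with no written proof, and your direct verification---checking additivity, bijectivity, isometry, $\C$-linearity into $A^{\#}$, multiplicativity out of $A^{\op}$, and compatibility with the (common) involution---is exactly the computation the authors are implicitly relying on.
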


In this paper,
we often find it more convenient to use~$A^{\#}$.

Let $G$ be a locally compact Hausdorff group,
and let $\alpha \colon G \to \Aut (A)$
be a point-norm continuous action.
For $g \in G$,
the same map $\alpha_g$,
viewed as a map from $A^{\#}$ to itself,
is also an automorphism.
To see that,
we note that it is clearly a real \ca{} automorphism,
and for each $\lambda \in \C$ and $a \in A$,
we have
\[
\alpha_g (\lambda \bullet_{\#} a)
 = \alpha_g \big( \overline{\lambda} a \big)
 = \overline{\lambda} \alpha_g (a)
 = \lambda \bullet_{\#} \alpha_g (a).
\]
Thus,
the same map gives us an action $\af^{\#}$ of $G$ on $A^{\#}$,
which we call the {\emph{conjugate action}}.
The definition of the crossed product shows that
$(A \rtimes_{\alpha} G)^{\#} \cong A^{\#} \rtimes_{\alpha^{\#}} G$:
they are identical as real \ca{s},
and the complex structure on $A \rtimes_{\alpha} G$
comes from the complex structure on~$A$.
The same map also gives an action
$\af^{\op}$ of $G$ on $A^{\op}$,
which we call the {\emph{opposite action}}.
The map from Lemma~\ref{L_5Y11_Conj}
intertwines $\af^{\op}$ with $\af^{\#}$ and hence
$(G, A^{\op}, \alpha^{\op})\cong (G, A^{\#}, \alpha^{\#})$.
The identification of the crossed product, however, is less direct.

If $(G, A, \alpha)$ and $(G, B, \beta)$ are $G$-\ca{s}
and are $G$-equivariantly isomorphic,
then $A \rtimes_{\alpha} G \cong B \rtimes_{\beta} G$.
Thus,
an equivariant version
of the problem of whether \ca{s} are isomorphic
to their opposites is whether $(G, A, \alpha)$
is isomorphic to $(G, A^{\#}, \alpha^{\#})$.

\section{Preparatory lemmas}\label{Sec_Coh}

Our construction requires two lemmas from cohomology,
some properties of a particular finite group,
a result on quasidiagonality of crossed products
of integer actions on section algebras of \ct{} fields,
and a lemma concerning tracial states on crossed products
by an automorphism with finite Rokhlin dimension.

The following lemma generalizes Lemma~3.6 of~\cite{PhX},
which is part of an example
of a topological space with specific properties
originally suggested by Greg Kuperberg.

\begin{lem}\label{L_4909_SameSig}
Let $n \in \N$,
let $M$ be a connected compact orientable manifold of dimension~$4 n$
and with no boundary,
and let $h \colon M \to M$ be a \cfn{}
such that $h_* \colon H_* (M; \Z) \to H_* (M; \Z)$
is an isomorphism.
Suppose that the signature of $M$ is nonzero.
Then $h$ is orientation preserving.
\end{lem}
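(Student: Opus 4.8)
The plan is to use the fact that the signature is a homotopy (indeed homology) invariant together with its behavior under orientation reversal. First I would recall that for a connected closed orientable $4n$-manifold $M$, Poincaré duality makes the cup product pairing on $H^{2n}(M;\Q)$ into a nondegenerate symmetric bilinear form, and the signature $\sigma(M)$ is the signature of this form, computed using the fundamental class $[M]$ to identify $H^{4n}(M;\Q)$ with $\Q$. The key observation is that $\sigma(M)$ depends on the choice of fundamental class: if one replaces $[M]$ by $-[M]$ (i.e. reverses the orientation), the pairing changes sign, so $\sigma(-M) = -\sigma(M)$.

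Next I would analyze what $h$ does. Since $h_* \colon H_*(M;\Z) \to H_*(M;\Z)$ is an isomorphism, in particular $h_* \colon H_{4n}(M;\Z) \cong \Z \to \Z$ is an isomorphism, hence multiplication by $\pm 1$; by definition $h$ is orientation preserving exactly when this sign is $+1$. Dually, $h^* \colon H^*(M;\Q) \to H^*(M;\Q)$ is a ring isomorphism, so it carries the cup product pairing on $H^{2n}$ to itself up to the scalar by which $h^*$ (equivalently $h_*$) acts on top (co)homology. Concretely, for $x, y \in H^{2n}(M;\Q)$ we have $\langle h^*x \smile h^*y, [M]\rangle = \langle h^*(x \smile y), [M]\rangle = \langle x \smile y, h_*[M]\rangle = \varepsilon \langle x \smile y, [M]\rangle$, where $\varepsilon = \pm 1$ is the degree of $h$. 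Thus the symmetric form $Q$ on $H^{2n}(M;\Q)$ is isomorphic (via the linear automorphism $h^*$) to $\varepsilon \cdot Q$.

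Now I would conclude by linear algebra. A nondegenerate symmetric bilinear form $Q$ over $\Q$ (or $\R$) and its negative $-Q$ are isomorphic as forms only if $\sigma(Q) = \sigma(-Q) = -\sigma(Q)$, i.e. only if $\sigma(Q) = 0$. Since we are assuming $\sigma(M) \neq 0$, the case $\varepsilon = -1$ is impossible, so $\varepsilon = +1$, i.e. $h$ is orientation preserving. The only mild subtlety — and the one place one must be slightly careful — is the passage between the integral hypothesis on $h_*$ and the rational statements about the intersection form: one needs that $h_*[M] = \varepsilon[M]$ with $\varepsilon \in \{\pm 1\}$, which is immediate since $h_*$ is an isomorphism of the infinite cyclic group $H_{4n}(M;\Z)$, and that $h^*$ on rational cohomology is the transpose of $h_*$ under the Kronecker pairing, which is standard. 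I do not expect any serious obstacle; the argument is essentially the classical proof that the signature is an oriented-homotopy invariant, run in reverse.
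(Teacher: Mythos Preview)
Your proof is correct and is essentially the same argument as the paper's, just organized more directly: where the paper introduces an auxiliary form $\omega_0$ and uses naturality of the cup and cap products to show $\omega_0 = \omega$ before comparing with $\rho(\alpha,\beta) = \omega(h^*\alpha,h^*\beta)$, you compute $\langle h^*x \smile h^*y,[M]\rangle = \varepsilon\,\langle x \smile y,[M]\rangle$ in one step via naturality of the Kronecker pairing and reach the same contradiction.
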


\begin{proof}
We recall the definition of the signature,
starting with the bilinear form
$\om \colon H^{2 n} (M; \Z) \times H^{2 n} (M; \Z) \to \Z$
defined as follows.
Let $e_0 \in H_0 (M; \Z)$
be the standard generator.
Thus there is an isomorphism $\nu \colon H_0 (M; \Z) \to \Z$
such that $\nu (k e_0) = k$ for all $k \in \Z$.
Further let $c \in H_{4 n} (M; \Z)$
be the generator corresponding to the orientation of~$M$
(the fundamental class).
Also recall the cup product
$(\af, \bt) \mapsto \af \smile \bt$
from $H^k (M; \Z) \times H^l (M; \Z)$ to $H^{k + l} (M; \Z)$
and the cap product
$(\af, \bt) \mapsto \af \frown \bt$
from $H^k (M; \Z) \times H_l (M; \Z)$ to $H_{l - k} (M; \Z)$.
Then $\om$ is given by
\[
\om (\af, \bt)
 = \nu ( [\af \smile \bt] \frown c )
\]
for $\af, \bt \in H^{2 n} (M; \Z)$.
The signature of the form gotten by tensoring with $\R$ is,
by definition,
the signature of~$M$.

The Universal Coefficient Theorem
(in~\cite{hatcher} see Theorem~3.2 and page 198)
and the Five Lemma imply that
$h^* \colon H^* (M; \Z) \to H^* (M; \Z)$
is an isomorphism.
In particular,
$h^* \colon H^{2 n} (M; \Z) \to H^{2 n} (M; \Z)$
is an isomorphism.
Therefore the bilinear form $\rh$ on $H^{2 n} (M; \Z)$,
given by
\[
\rh (\af, \bt)
 = \om \big( h^* (\af), \, h^* (\bt) \big)
 = \nu \big( [h^* (\af) \smile h^* (\bt)] \frown c \big)
\]
for $\af, \bt \in H^{2 n} (M; \Z)$,
is equivalent to~$\om$.
In particular,
$\rh$ has the same signature as~$\om$.

Now define a bilinear form $\om_0$ on $H^{2 n} (M; \Z)$
by
\[
\om_0 (\af, \bt)
 = \nu \big( [h^* (\af) \smile h^* (\bt)] \frown (h_*)^{-1} (c) \big)
\]
for $\af, \bt \in H^{2 n} (M; \Z)$.
The formula for $\om_0$ differs
from the formula for $\rh$ only in that
$c$ has been replaced by $(h_*)^{-1} (c)$.
The maps $\nu \circ h_*$ and $\nu$
agree on $H_0 (M; \Z)$.
(This is true for any \ct{} map $h \colon M \to M$.)
Naturality of the cup and cap products
therefore implies that $\om = \om_0$.
If $(h_*)^{-1} (c) = - c$,
then $\om_0 = - \rh$,
so $\om_0$ and $\rh$ have opposite signatures.
Since $\om_0 = \om$ and $\rh$ have the same signature
by the previous paragraph,
we find that the signature of $\om$ is zero.
This contradiction shows that $(h_*)^{-1} (c) \neq - c$.

Since $h_*$ is an isomorphism and $H_{4 n} (M; \Z) \cong \Z$,
it follows that $h_* (c) = \pm c$.
The previous paragraph rules out $h_* (c) = - c$,
so $h_* (c) = c$.
\end{proof}

\begin{lem}\label{L_4909_IsoAfterSn}
Let $m \in \N$ and
let $M$ be a connected compact orientable manifold of dimension~$m$.
Let $n \in \N$ satisfy $n > m$,
and let $h \colon S^n \times M \to S^n \times M$
be a \cfn{}.
Let $y_0 \in S^n$,
let $i \colon M \to S^n \times M$
be $i (x) = (y_0, x)$ for $x \in M$,
and let $p \colon S^n \times M \to M$
be the projection on the second factor.
Then:
\begin{enumerate}
\item\label{L_4909_IsoAfterSn-homology}
If
$h_* \colon H_* (S^n \times M; \, \Z) \to H_* (S^n \times M; \, \Z)$
is an isomorphism then
$(p \circ h \circ i)_* \colon H_* (M; \Z) \to H_* (M; \Z)$
is an isomorphism.
\item\label{L_4909_IsoAfterSn-fundamental-group}
If
$h_* \colon \pi_1 (S^n \times M ) \to \pi_1 (S^n \times M)$
is an isomorphism
then $(p \circ h \circ i)_* \colon \pi_1 (M) \to \pi_1 (M)$
is an isomorphism.
(We omit the choice of basepoints in $\pi_1$,
since the spaces in question are path connected.)
\end{enumerate}
\end{lem}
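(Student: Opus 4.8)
The plan is to reduce both parts to one observation: because $n > m = \dim M$, the maps $p$ and $i$ identify $\pi_1 (S^n \times M)$ with $\pi_1 (M)$ and $H_k (S^n \times M; \Z)$ with $H_k (M; \Z)$ for $k \le m$, while for $k > m$ the group $H_k (M; \Z)$ vanishes since $M$ is an $m$-manifold. The trivial relation $p \circ i = \id_M$ gives $p_* \circ i_* = \id$ on $H_k (M; \Z)$ for every $k$ and on $\pi_1 (M)$; hence each $i_*$ is a split monomorphism and each $p_*$ a split epimorphism, and whenever one of them is an isomorphism in some degree the other is its inverse there. Granting this, functoriality rewrites $(p \circ h \circ i)_* = p_* \circ h_* \circ i_*$, a composite of isomorphisms as soon as $i_*$ and $p_*$ are isomorphisms, since $h_*$ is one by hypothesis.

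For part~\ref{L_4909_IsoAfterSn-fundamental-group}: from $m \ge 1$ and $n > m$ we get $n \ge 2$, so $\pi_1 (S^n) = 1$, and the two coordinate projections give an isomorphism $\pi_1 (S^n \times M) \cong \pi_1 (S^n) \times \pi_1 (M) = \pi_1 (M)$ under which $p_*$ and $i_*$ are mutually inverse. The factorization above then finishes this case.

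For part~\ref{L_4909_IsoAfterSn-homology}: by the K\"unneth theorem, and since $H_* (S^n; \Z)$ is free so that the Tor terms vanish, $H_k (S^n \times M; \Z) \cong \bigoplus_{a + b = k} H_a (S^n; \Z) \otimes H_b (M; \Z) \cong H_k (M; \Z) \oplus H_{k - n} (M; \Z)$, the isomorphism being given by the homology cross product. For $0 \le k \le m < n$ the summand $H_{k - n} (M; \Z)$ is zero, and on the surviving summand $H_0 (S^n; \Z) \otimes H_k (M; \Z) \cong H_k (M; \Z)$ the cross product is $v \mapsto [y_0] \times v = i_* (v)$; thus $i_*$, hence also $p_*$, is an isomorphism in these degrees, and the factorization above settles them. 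For $k > m$ the group $H_k (M; \Z)$ is zero, so $(p \circ h \circ i)_*$ is automatically an isomorphism there. This covers all degrees.

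The only step that is not pure bookkeeping is the identification of the degree-$0$ part of the K\"unneth isomorphism with $i_*$ — that is, the fact that $[y_0] \times v$ is the image of $v$ under $x \mapsto (y_0, x)$ — which is just naturality of the homology cross product; I expect this to be the only place where any care is needed. Everything else is forced by $p \circ i = \id_M$ and by the vanishing ranges coming from $n > m = \dim M$.
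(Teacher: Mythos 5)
Your proof is correct and follows essentially the same route as the paper: both arguments use the K\"unneth theorem to identify $i_*$ (equivalently, the cross product with $e_0$) as an isomorphism on $H_k$ for $k \le m < n$, note the vanishing of $H_k(M;\Z)$ for $k > m$, handle $\pi_1$ via the product decomposition ($\pi_1(S^n) = 1$ since $n \ge 2$), and then factor $(p \circ h \circ i)_* = p_* \circ h_* \circ i_*$. Your use of $p \circ i = \id_M$ to get $p_*$ as the inverse of $i_*$ matches the paper's parenthetical observation that $p_* = \beta^{-1}$.
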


\begin{proof}
We prove~(\ref{L_4909_IsoAfterSn-homology}).
Let $k \in \Nz$;
we show that
\[
(p \circ h \circ i)_* \colon H_k (M; \Z) \to H_k (M; \Z)
\]
is an isomorphism.
For $k > m$,
$H_k (M; \Z) = 0$,
so this is immediate.
Accordingly,
we may assume that $0 \leq k \leq m$.

Let $e_0$ be the usual generator of $H_0 (S^n; \Z) \cong \Z$
and let $e_n$ be a generator of $H_n (S^n; \Z) \cong \Z$.
Since $H_* (S^n; \Z)$ is free,
the K\"{u}nneth formula
(\cite[Theorem 3B.6]{hatcher})
implies that the standard pairing
$(\et, \mu) \mapsto \et \times \mu$
yields a graded isomorphism
\[
\om \colon
 H_* (S^n; \Z) \otimes H_* (M; \Z) \to H_* (S^n \times M; \, \Z).
\]
Since $H_l (M; \Z) = 0$ for $l \geq n$,
it follows that
$\mu \mapsto \om (e_0 \otimes \mu)$
defines an isomorphism
$\bt \colon H_k (M; \Z) \to H_k (S^n \times M; \, \Z)$
(and, similarly,
$\mu \mapsto e_n \times \mu$
defines an isomorphism
$H_k (M; \Z) \to H_{n + k} (S^n \times M; \, \Z)$).
Moreover,
$\bt = i_*$.

Let $p_0 \colon S^n \to \{ y_0 \}$
be the unique map from $S^n$ to $\{ y_0 \}$.
Since $(p_0)_* (e_0)$ is a generator of
$H_0 (\{ y_0 \}; \Z) \cong \Z$,
naturality in the K\"{u}nneth formula
implies that $p_* (e_0 \times \mu) = \mu$
for $\mu \in H_k (M; \Z)$.
(By contrast,
$p_* (e_n \times \mu) = 0$.)
Thus
\[
p_* \colon H_k (S^n \times M; \, \Z) \to H_k (M; \Z)
\]
is an isomorphism.
(In fact,
$p_* = \bt^{-1}$.)

We factor
$(p \circ h \circ i)_* \colon H_k (M; \Z) \to H_k (M; \Z)$
as
\[
H_k (M; \Z)
 \stackrel{i_*}{\longrightarrow} H_k (S^n \times M; \, \Z)
 \stackrel{h_*}{\longrightarrow} H_k (S^n \times M; \, \Z)
 \stackrel{p_*}{\longrightarrow} H_k (M; \, \Z).
\]
We have just shown that the first and last maps are
isomorphisms,
and the middle map is an isomorphism
by hypothesis.
So $(p \circ h \circ i)_*$ is an isomorphism.

Part~(\ref{L_4909_IsoAfterSn-fundamental-group})
follows immediately as soon as we know that
$p_*$ and $i_*$ are isomorphisms.
This fact follows from \cite[Proposition~1.12]{hatcher}.
\end{proof}

We will start our constructions
with the manifold $M$ used in~\cite[Example~3.5]{PhX},
with
$\pi_1 (M)
 \cong \langle a, b \mid a^3 = b^7 = 1, \,\, a b a^{-1} = b^2 \rangle$.
There is a gap in the proof for \cite[Example~3.5]{PhX};
we need to know that there is no automorphism
of $\pi_1 (M)$ which sends the image of~$a$ in the abelianization
to the image of~$a^2$.
We prove that here;
for convenience of the reader and to establish notation in the proof,
we prove all the properties of~$G$ from scratch.

\begin{lem}\label{L_5X17_G}
Let $G$ be the group with presentation
in terms of generators and relations given by
$G = \langle a, b \mid a^3 = b^7 = 1, \,\, a b a^{-1} = b^2 \rangle$.
Then $G$ is a finite group with $21$ elements,
its abelianization is isomorphic to $\Z / 3 \Z$
and is generated by the image of~$a$,
and every automorphism of $G$ induces the identity automorphism
on its abelianization.
\end{lem}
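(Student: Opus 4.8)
The plan is to analyze the group $G = \langle a, b \mid a^3 = b^7 = 1, \, aba^{-1} = b^2 \rangle$ directly as a semidirect product. First I would observe that the relation $aba^{-1} = b^2$ means conjugation by $a$ acts on the cyclic group $\langle b \rangle \cong \Z/7\Z$ as multiplication by $2$; since $2^3 = 8 \equiv 1 \pmod 7$, this is an automorphism of order dividing $3$, and in fact of order exactly $3$ (as $2 \not\equiv 1 \pmod 7$), so it is consistent with $a^3 = 1$. Thus $G \cong (\Z/7\Z) \rtimes_{\sigma} (\Z/3\Z)$ where $\sigma$ is the order-$3$ automorphism $b \mapsto b^2$. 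In particular $G$ has $21$ elements, $N := \langle b \rangle$ is a normal Sylow $7$-subgroup, and every element of $G$ is uniquely $b^i a^j$ with $0 \le i < 7$, $0 \le j < 3$.

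Next I would compute the abelianization. In $G/[G,G]$ the relation $aba^{-1} = b^2$ forces $b = b^2$, hence $b = 1$ in the abelianization; so $G^{\mathrm{ab}}$ is the quotient of $\langle a \mid a^3 = 1\rangle$, which is $\Z/3\Z$, generated by the image of $a$. (One should also check $[G,G]$ contains $b$, which is immediate from $[a,b^{-1}] = a b^{-1} a^{-1} b = b^{-2} b = b^{-1}$, so actually $[G,G] = N$ and $G^{\mathrm{ab}} = G/N \cong \Z/3\Z$.)

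The final and main claim is that every automorphism $\varphi$ of $G$ induces the identity on $G^{\mathrm{ab}} \cong \Z/3\Z$; equivalently, $\varphi(a) \equiv a \pmod{N}$ rather than $\varphi(a) \equiv a^2 \pmod N$. The key point is that $N$ is characteristic in $G$ — it is the unique Sylow $7$-subgroup, being normal — so $\varphi$ descends to an automorphism $\bar\varphi$ of $G/N \cong \Z/3\Z$, and $\mathrm{Aut}(\Z/3\Z) \cong \Z/2\Z$. I must rule out $\bar\varphi$ being the nontrivial automorphism, i.e. $\varphi(a) = b^k a^2$ for some $k$. The obstruction is that the order of $a$ is $3$, so $\varphi(a)$ must also have order $3$; I would show that every element of $G$ of the form $b^k a^2$ is conjugate to $a^2$ (or at least has order $3$ and generates together with $N$...) — this alone does not give a contradiction, since $b^k a^2$ does have order $3$. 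So the real argument must use more structure: $\varphi$ must send $N = \langle b\rangle$ to itself, say $\varphi(b) = b^r$ with $\gcd(r,7)=1$, and must respect the relation $\varphi(a)\varphi(b)\varphi(a)^{-1} = \varphi(b)^2$. Writing $\varphi(a) = b^k a^j$ with $j \in \{1,2\}$, conjugation by $b^k a^j$ sends $b^r$ to $b^{r \cdot 2^j}$, and this must equal $b^{2r}$; hence $2^j \equiv 2 \pmod 7$, forcing $j = 1$. This is the crux, and it is short; the main obstacle is simply assembling the bookkeeping cleanly — verifying $N$ is characteristic, that any automorphism fixes $N$ setwise with $\varphi(b) = b^r$, and then reading off the constraint $2^j \equiv 2 \pmod 7$ from the defining relation. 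Therefore $\varphi(a) \in b^k a$, so $\varphi(a) \equiv a \pmod N$, and $\bar\varphi = \mathrm{id}$ on $G^{\mathrm{ab}}$, completing the proof.
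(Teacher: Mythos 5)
Your proof is correct and follows essentially the same route as the paper: identify $G$ as $(\Z/7\Z)\rtimes(\Z/3\Z)$ of order $21$, read off $G^{\mathrm{ab}}\cong\Z/3\Z$ from the relation killing $b$, note that $\langle b\rangle$ is characteristic as the unique Sylow $7$-subgroup, and apply $\varphi$ to the defining relation $aba^{-1}=b^2$ to obtain a congruence mod~$7$ that forces $\varphi(a)\equiv a\pmod{\langle b\rangle}$. The paper phrases the last step as a contradiction from the case $j=2$ (deriving $b^{2s}=1$ and using odd order), while you track the exponent $2^j\equiv 2\pmod 7$ directly, but these are the same calculation.
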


\begin{proof}
Rewrite the last relation as
$a b = b^2 a$.
It follows that for all $r, s \in \Nz$
there is $t \in \Nz$ such that $a^r b^s = b^t a^r$.
Since $a$ and $b$ have finite order,
we therefore have
\begin{equation}\label{Eq_5X17_ListG}
G = \big\{ b^t a^r \colon r, t \in \Nz \big\}.
\end{equation}
Since $a^3 = b^7 = 1$,
it follows that $G$ has at most $21$ elements.

Write $\Z / 3 \Z = \{ 0, 1, 2 \}$
and $\Z / 7 \Z = \{ 0, 1, \ldots, 6 \}$.
One checks that there is an automorphism $\gm$ of $\Z / 7 \Z$
such that $\gm (1) = 2$,
and that $\gm^3 = \id_{\Z / 7 \Z}$.
Thus,
there is a semidirect product group
$S = \Z / 7 \Z \rtimes_{\gm} \Z / 3 \Z$.
Moreover, the elements $(0, 1)$ and $(1, 0)$ satisfy the relations
defining~$G$.
Therefore there is a surjective \hm{} $\ps \colon G \to S$
such that $\ps (a) = (1, 0)$ and $\ps (b) = (0, 1)$.
So $G$ has exactly $21$ elements and
the subgroup $\langle b \rangle \subset G$ is normal
and has order~$7$.

Let $H$ be the abelianization of $G$ and let $\pi \colon G \to H$
be the associated map.
The relations for $G$ show that
there is a surjective \hm{} $\kp \colon G \to \Z / 3 \Z$
such that $\kp (a) = 1$ and $\kp (b) = 0$.
Therefore $\Z / 3 \Z$ is a quotient of~$H$.
Since $\card (G) / \card (H)$ is
prime
and $G$ is not abelian, we get $H \cong \Z / 3 \Z$,
generated by $\pi (a)$.
Moreover, $\pi (b)$ is the identity element of~$H$.

Now let $\ph \colon G \to G$ be an automorphism,
and let ${\ov{\ph}} \colon H \to H$ be the induced automorphism of~$H$.
To show that ${\ov{\ph}} = \id_H$,
we must rule out ${\ov{\ph}} (\pi (a)) = \pi (a^2)$.
So assume ${\ov{\ph}} (\pi (a)) = \pi (a^2)$.
Use (\ref{Eq_5X17_ListG}),
$a^3 = b^7 = 1$, and $\pi (b) = 1$
to find $r \in \{ 0, 1, \ldots, 6 \}$
such that $\ph (a) = b^r a^2$.
Since $\langle b \rangle$ is a normal Sylow $7$-subgroup,
all elements of $G$ of order~$7$
are contained in~$\langle b \rangle$,
so there is $s \in \{ 1, 2, \ldots, 6 \}$ such that $\ph (b) = b^s$.
Apply $\ph$ to the relation $a b a^{-1} = b^2$
to get
\begin{equation}\label{Eq_5X17_Rel}
b^r a^2 b^s a^{-2} b^{-r} = b^{2 s}.
\end{equation}
The relation $a b a^{-1} = b^2$
also implies that $a b^s a^{-1} = b^{2 s}$,
so $a^2 b^s a^{-2} = b^{4 s}$.
Substituting in~(\ref{Eq_5X17_Rel})
gives $b^{4 s} = b^{2 s}$.
Thus $(b^s)^2 = 1$.
Since $\langle b \rangle$ is cyclic of odd order,
we get $b^s = 1$,
so $\ph (b) = 1$,
a contradiction.
\end{proof}

The proof of the following lemma is motivated by ideas
from the proof of Theorem~9 in~\cite{pimsner}.

\begin{lem}\label{L_5Y08_QD}
Let $A$ be a separable continuous trace \ca,
and let $\af \in \Aut (A)$.
Set $X = \Prim (A)$,
let $h \colon X \to X$ be homeomorphism induced by~$\af$
(so that if $P \subset A$ is a primitive ideal,
then $h (P) = \af (P)$),
and assume that $X$ is
an infinite compact metrizable space
and that $h$ is minimal.
Then $A \rtimes_{\af} \Z$ is simple and quasidiagonal.
\end{lem}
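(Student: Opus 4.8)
The plan is to split the statement into two independent assertions: simplicity of $A \rtimes_{\af} \Z$, and quasidiagonality. Simplicity is the easy half. Since $X = \Prim(A)$ with the induced action of $\Z$ has no nontrivial invariant open sets (minimality of $h$), the action of $\Z$ on the ideal lattice of $A$ is free of invariant ideals, so $A$ has no nontrivial $\af$-invariant ideals; as $\Z$ is amenable, it follows from the standard criterion (freeness of the action on $\Prim(A)$, which holds because $h$ is a minimal homeomorphism of an infinite space and hence has no periodic points) that $A \rtimes_{\af} \Z$ is simple. I would state this briefly, citing the standard crossed-product simplicity results.

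For quasidiagonality, the plan is to follow Pimsner's approach from the proof of Theorem~9 in~\cite{pimsner}, which characterizes quasidiagonality of $C(X) \rtimes \Z$ for a minimal homeomorphism in terms of the dynamics, and to reduce the continuous-trace case to the commutative case. First I would pass to a more convenient model: a separable continuous trace \ca{} $A$ with spectrum $X$ is, by the Dixmier--Douady theory, stably isomorphic to the section algebra of a locally trivial bundle over $X$ with fibre $\mathcal{K}$, and quasidiagonality is preserved under stabilization, so I may replace $A$ by $A \otimes \mathcal{K}$ and then by such a section algebra. The automorphism $\af$ covers the homeomorphism~$h$ of~$X$. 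The key structural point is that the center of (the multiplier algebra of) $A$ is $C(X)$, and $\af$ restricts to the automorphism of $C(X)$ dual to~$h$; one then wants to build finite-dimensional almost-representations of $A \rtimes_{\af} \Z$ out of the dynamics of~$h$ on~$X$ together with local triviality of the bundle.

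The main obstacle — and the step I would spend the most care on — is producing the approximating finite-dimensional operators: given $\ep > 0$ and a finite set $F \subset A \rtimes_{\af} \Z$, one needs a finite-dimensional Hilbert space and a completely positive contractive map that is multiplicative and isometric to within $\ep$ on~$F$. Here I would exploit minimality of~$h$: using a Rokhlin-type tower decomposition of~$X$ (for a minimal homeomorphism of an infinite compact metrizable space one gets, for any $N$, a clopen-in-a-Kakutani-tower base over which $X$ is swept out by $h, h^2, \dots$), one trivializes the $\mathcal{K}$-bundle over each small tower piece and mimics the Berg-technique / Voiculescu argument that shows $C(X) \rtimes_h \Z$ is quasidiagonal: cut down by spectral projections of the implementing unitary on each column of the tower, shift along the column, and fudge the single "wrap-around" step. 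The continuous-trace coefficients contribute only local $\mathcal{K}$-matrix coefficients, which one handles by standard finite-rank truncation on each fibre. I would remark that $h$ having no periodic points is exactly what makes the tower argument go through, and that the separability and metrizability hypotheses are used to get countable approximating data and to invoke the Rokhlin tower construction.

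Alternatively, if one prefers a softer route, one can cite the result that $A \rtimes_{\af} \Z$ is a crossed product by an action with finite Rokhlin dimension (minimality plus finite covering dimension of~$X$ gives this), that quasidiagonality passes to such crossed products of quasidiagonal $C^*$-algebras when the algebra is in a suitable class, and that $A$ itself — a separable continuous trace \ca{} with compact spectrum — is quasidiagonal. I would present the Pimsner-style argument as the main proof and mention this as a remark.
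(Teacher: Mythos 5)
Your split into simplicity and quasidiagonality matches the paper, and the simplicity half (citing the standard criterion for topologically free minimal actions; the paper uses the corollary to Theorem~1 of~\cite{AS}) is essentially what the paper does. The quasidiagonality half, however, has a genuine gap. You propose a Kakutani--Rokhlin tower decomposition of~$X$ with a clopen base, but in the paper's applications $X$ is a connected manifold $S^n\times M$, and the lemma allows an arbitrary infinite compact metrizable $X$; a connected space has no nontrivial clopen subsets, so the tower decomposition you describe literally cannot be formed. Even for spaces that do admit such towers, you would also need the bundle to trivialize over every tower piece and the pieces to be small, neither of which is automatic. The paper's actual proof avoids any decomposition of~$X$: one fixes a single point $x_0$, takes one local trivialization $\kappa$ of the bundle over a closed neighborhood $S$ of~$x_0$, uses minimality only to produce return times $k(n)\to\infty$ with $h^{-k(n)}(x_0)\to x_0$, and then builds $\rho_n\colon A\to M_{k(n)}(K)$ by stringing the evaluations $\sigma_j=\ev_{x_0}\circ\kappa\circ\alpha^j$ along the single orbit segment of length $k(n)$, conjugated by fractional powers $w_n^j$ of a unitary $v_n$ intertwining the two endpoint evaluations (the Berg correction is concentrated at the one wrap-around step). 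These $\rho_n$ together with the cyclic permutation unitaries $u_n$ give a nonzero $*$-homomorphism $\varphi\colon A\rtimes_\alpha\Z\to C_{\mathrm b}(\N,K)/C_0(\N,K)$; simplicity makes $\varphi$ injective, nuclearity of $A\rtimes_\alpha\Z$ lets one lift $\varphi$ to a completely positive contraction, and a finite-rank cutdown of the lift yields quasidiagonality. The missing idea in your sketch is precisely this shift from a global tower decomposition to a single recurrent orbit segment plus passage through the corona algebra.

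Your alternative ``soft route'' also does not prove the lemma as stated: the lemma makes no finite-dimensionality assumption on $X$, so one cannot invoke finite Rokhlin dimension of $\alpha$ (which in the paper is derived from $\dim(S^n\times M)<\infty$ only later, for a different purpose), and there is in any case no general theorem asserting that quasidiagonality passes to crossed products by automorphisms with finite Rokhlin dimension without further hypotheses.
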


\begin{proof}
Simplicity of $A \rtimes_{\af} \Z$
follows from the corollary to Theorem~1 in~\cite{AS}.

We claim that there is a nonzero
\hm{}
\[
\ph \colon
 A \rtimes_{\af} \Z \to C_{\mathrm{b}} (\N, K) / C_0 (\N, K).
\]
Since $A \rtimes_{\af} \Z$ is simple,
it will then follow that $\ph$ is injective.
Since $A$ is nuclear,
so is $A \rtimes_{\af} \Z$.
Therefore we can lift $\ph$ to a completely positive contraction
$T \colon A \rtimes_{\af} \Z \to C_{\mathrm{b}} (\N, \, K)$.
We thus get a sequence $(T_n)_{n \in \N}$
of completely positive contractions
$T_n \colon A \rtimes_{\af} \Z \to K$
such that
\begin{equation}\label{Eq_6119_St}
\lim_{n \to \infty} \| T_n (a b) - T_n (a) T_n (b) \| = 0
\andeqn
\lim_{n \to \infty} \| T_n (a) \| = \| a \|
\end{equation}
for all $a, b \in A \rtimes_{\af} \Z$.
We can pick a sequence $(p_n)_{n \in \N}$ in~$K$
consisting of finite rank projections
such that,
if for all $n \in \N$
we replace $T_n$ by $a \mapsto p_n T_n (x) p_n$,
the resulting sequence of maps still satisfies~(\ref{Eq_6119_St}).
Thus, we may assume that there is a sequence $(l (n))_{n \in \N}$
in~$\N$
such that for all $n \in \N$,
we actually have a completely positive contraction
$T_n \colon A \rtimes_{\af} \Z \to M_{l (n)}$;
moreover, the sequence $(T_n)_{n \in \N}$
satisfies~(\ref{Eq_6119_St}).
Thus $A \rtimes_{\af} \Z$ is quasidiagonal.

It remains to prove the claim.
It suffices to prove the claim
for $A \otimes K$ and $\af \otimes \id_K$
in place of $A$ and~$\af$.
By Proposition 5.59 in~\cite{RW},
we may therefore assume that $A$
is the section algebra of a locally trivial
continuous field $E$ over~$X$
with fiber~$K$.

Fix a point $x_0 \in X$.
Choose a closed neighborhood $S$ of~$x_0$
such that $E |_S$ is trivial,
and let $\kp \colon A \to C (S, K)$
be the composition of the quotient map
$A \to \Gm (E |_S)$ and a trivialization
$\Gm (E |_S) \to C (S, K)$.
For $x \in S$ let $\ev_x \colon C (S, K) \to K$
be evaluation at~$x$.
For $n \in \Z$ define
$\sm_n = \ev_{x_0} \circ \kp \circ \af^n \colon A \to K$,
a surjective \hm{} with kernel $h^{- n} (x_0) \in X = \Prim (A)$.

Since $h$ is minimal,
there is a sequence $(k (n))_{n \in \N}$ in~$\N$
such that
\[
\limi{n} k (n) = \I
\andeqn
\limi{n} h^{- k (n)} (x_0) = x_0.
\]
\Wolog{} $h^{- k (n)} (x_0) \in S$ for all $n \in \N$.
For $n \in \N$,
the \hm{s} $\sm_{k (n)}$
and $\ev_{h^{- k (n)} (x_0)} \circ \kp$
are surjective \hm{s} from $A$ to~$K$
with the same kernel (namely $h^{- k (n)} (x_0) \in X = \Prim (A)$),
so they are unitarily equivalent irreducible representations.
That is,
there is a unitary $v_n \in M (K) = L (l^2)$
such that
\[
v_n \sm_{k (n)} (a) v_n^{-1}
 = \big( \ev_{h^{- k (n)} (x_0)} \circ \kp \big) (a)
\]
for all $a \in A$.
Choose $c_n \in M (K)_{\sa}$
with $\| c_n \| \leq \pi$
such that $v_n = \exp (i c_n)$,
and set $w_n = \exp (i k (n)^{-1} c_n)$.
Then
\[
\| w_n - 1 \| \leq \frac{\pi}{k (n)}
\andeqn
w_n^{k (n)} = v_n.
\]
Define $\rh_n \colon A \to M_{k (n)} (K)$ by
\begin{align*}
& \rh_n (a)
 = \diag \big( \sm_0 (a),
    \, w_n \sm_1 (a) w_n^{-1},
    \, w_n^2 \sm_2 (a) w_n^{-2},
\\
& \hspace*{11em} \mbox{}
    \, \ldots,
    \, w_n^{k (n) - 1} \sm_{k (n) - 1} (a) w_n^{- (k (n) - 1)} \big)
\end{align*}
for $a \in A$.
Further define the permutation unitary $u_n \in M (M_{k (n)} (K))$ by
\[
u_n = \left( \begin{matrix}
  0     &  1     &  0     & \cdots & \cdots &  0        \\
  0     &  0     &  1     & \ddots & \ddots & \vdots    \\
 \vdots & \vdots & \ddots & \ddots & \ddots & \vdots    \\
 \vdots & \vdots & \ddots & \ddots &  1     &  0        \\
  0     &  0     & \cdots & \cdots &  0     &  1        \\
  1     &  0     & \cdots & \cdots &  0     &  0
\end{matrix} \right).
\]

We want to show that for all $a \in A$ we have
$\limi{n} \| \rh_n ( \af (a)) - u_n \rh_n (a) u_n^* \| = 0$.
To do this,
let $a \in A$.
Then
\begin{align*}
& \rh_n ( \af (a))
\\
& \hspace*{1em} \mbox{}
 = \diag \big( \sm_1 (a),
    \, w_n \sm_2 (a) w_n^{-1},
    \, w_n^2 \sm_3 (a) w_n^{-2},
    \, \ldots,
    \, w_n^{k (n) - 1} \sm_{k (n)} (a) w_n^{- (k (n) - 1)} \big)
\\
& \hspace*{1em} \mbox{}
 = u_n \diag \big( w_n^{k (n) - 1} \sm_{k (n)} (a) w_n^{- (k (n) - 1)},
    \, \sm_1 (a),
\\
& \hspace*{6em} \mbox{}
    \, w_n \sm_2 (a) w_n^{-1},
    \, \ldots,
    \, w_n^{k (n) - 2} \sm_{k (n) - 1} (a) w_n^{- (k (n) - 2)}
                   \big) u_n^*.
\end{align*}
Therefore
\begin{align*}
& \| \rh_n ( \af (a)) - u_n \rh_n (a) u_n^* \|
\\
& \mbox{}
 = \| \rh_n (a) - u_n^* \rh_n ( \af (a) ) u_n \|
\\
&
% \hspace*{1em}
\mbox{}
 = \max \big( \big\| \sm_0 (a)
    - w_n^{k (n) - 1} \sm_{k (n)} (a) w_n^{- (k (n) - 1)} \big\|,
\\
& \hspace*{3em} \mbox{}
     \, \big\| w_n \sm_1 (a) w_n^{-1} - \sm_1 (a) \big\|,
% \\
% & \hspace*{3em} \mbox{}
     \, \big\| w_n^2 \sm_2 (a) w_n^{-2} - w_n \sm_2 (a) w_n^{-1} \big\|,
\\
& \hspace*{3em} \mbox{}
     \, \ldots,
     \, \big\| w_n^{k (n) - 1} \sm_{k (n) - 1} (a) w_n^{- (k (n) - 1)}
     - w^{k (n) - 2} \sm_{k (n) - 1} (a) w_n^{- (k (n) - 2)}
                          \big\| \big).
\end{align*}
Every term except the first on the right hand side of this estimate
is dominated by
\[
2 \| w_n - 1 \| \| a \| \leq \frac{2 \pi \| a \|}{k (n)}.
\]
The first term is estimated as follows:
\begin{align*}
& \big\| \sm_0 (a)
    - w_n^{k (n) - 1} \sm_{k (n)} (a) w_n^{- (k (n) - 1)} \big\|
\\
& \hspace*{4em} \mbox{}
 \leq \| \sm_0 (a) - v_n \sm_{k (n)} (a) v_n^* \|
\\
& \hspace*{6em} \mbox{}
   + \big\| w_n^{k (n)} \sm_{k (n)} (a) w_n^{- k (n)}
     - w_n^{k (n) - 1} \sm_{k (n)} (a) w_n^{- (k (n) - 1)} \big\|
\\
& \hspace*{4em} \mbox{}
 \leq \big\| (\ev_{x_0} \circ \kp) (a)
     - \big( \ev_{h^{- k (n)} (x_0)} \circ \kp \big) (a) \big\|
    + 2 \| w_n - 1 \| \| a \|
\\
& \hspace*{4em} \mbox{}
 \leq \big\| (\ev_{x_0} \circ \kp) (a)
     - \big( \ev_{h^{- k (n)} (x_0)} \circ \kp \big) (a) \big\|
    + \frac{2 \pi \| a \|}{k (n)}.
\end{align*}
Now let $\ep > 0$.
Since $\kp (a) \in C (S, K)$ is \ct{}
and $\limi{n} h^{- k (n)} (x_0) = x_0$,
there is $N_1 \in \N$ such that for all $n \geq N_1$
we have
\[
\big\| (\ev_{x_0} \circ \kp) (a)
     - \big( \ev_{h^{- k (n)} (x_0)} \circ \kp \big) (a) \big\|
  < \frac{\ep}{2}.
\]
Since $\limi{n} k (n) = \I$,
there is $N_2 \in \N$ such that for all $n \geq N_2$
we have
\[
\frac{2 \pi \| a \|}{k (n)}
  < \frac{\ep}{2}.
\]
For $n \geq \max (N_1, N_2)$,
we then have $\| \rh_n ( \af (a)) - u_n \rh_n (a) u_n^* \| < \ep$,
as desired.

For $n \in \N$
choose an isomorphism $\ps_n \colon M_{k (n)} (K) \to K$,
and use the same symbol for the induced isomorphism
$M_{k (n)} ( M (K)) \to M (K)$.
Let $u \in M \big( C_{\mathrm{b}} (\N, K) / C_0 (\N, K) \big)$
be the image there of
\[
\big( \ps_1 (u_1), \, \ps_2 (u_2), \, \ldots \big)
  \in C_{\mathrm{b}} (\N, M (K)),
\]
and for $a \in A$ let
$\ps (a) \in C_{\mathrm{b}} (\N, K) / C_0 (\N, K)$
be the image there of
\[
\big( (\ps_1 \circ \rh_1) (a), \, (\ps_2 \circ \rh_2) (a),
    \, \ldots \big)
 \in C_{\mathrm{b}} (\N, K).
\]
Then $u \ps (a) u^* = \ps (\af (a))$ for all $a \in A$,
so $u$ and $\ps$ together define a \hm{}
\[
\ph \colon
 A \rtimes_{\af} \Z \to C_{\mathrm{b}} (\N, K) / C_0 (\N, K).
\]
This \hm{} is nonzero
because if we choose $c \in K \smallsetminus \{ 0 \}$
then there is $a \in A$
such that $\kp (a)$ is the constant function with value~$c$,
and $\| \ps (a) \|$ is easily checked to be~$\| c \|$.
This completes the proof of the claim,
and thus of the lemma.
\end{proof}

To show that the crossed product is quasidiagonal,
it isn't actually necessary that $h$ be minimal.
It suffices to assume that every point of $X$ is chain recurrent.
The basic idea is the same,
but the notation gets messier.

The next lemma shows that for crossed products by automorphisms
with finite Rokhlin dimension, any tracial state on the crossed product
arises from an invariant tracial state on the original algebra.
We refer to~\cite{HP} for a discussion of finite Rokhlin dimension
in the nonunital setting.
(See Definition 1.21 of~\cite{HP}.)

\begin{lem}\label{lem_dimrok-traces}
Let $A$ be a separable \ca,
and let $\alpha \in \Aut (A)$ be an automorphism
with finite Rokhlin dimension.
Let $P \colon A \rtimes_{\alpha} \Z \to A$
be the canonical conditional expectation.
Then for any tracial state $\ta$ on $A \rtimes_{\alpha} \Z$
there is an $\alpha$-invariant tracial state $\rh$ on $A$
such that $\ta = \rh \circ P$.
\end{lem}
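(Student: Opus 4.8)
My plan is to prove that $\ta$ vanishes on $a u^n$ for every $a \in A$ and every $n \in \Z \setminus \{ 0 \}$, where $u \in M (A \rtimes_{\af} \Z)$ denotes the canonical unitary implementing~$\af$; granting this, the rest of the argument is formal. Set $\rh = \ta |_A$. Then $\rh$ is a positive, tracial linear functional on~$A$, and it is $\af$-invariant because $\af (a) = u a u^*$ in the crossed product, so that $\rh ( \af (a)) = \ta (u a u^*) = \ta (a) = \rh (a)$ by the trace property. Assuming the vanishing statement, for any element $x = \sum_n a_n u^n$ of the dense $*$-subalgebra of finite sums (with $a_n \in A$) we get $\ta (x) = \ta (a_0) = \rh (a_0) = \rh (P (x))$, so $\ta = \rh \circ P$ by continuity. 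Evaluating both sides on an approximate identity of~$A$ (which is also one for $A \rtimes_{\af} \Z$) shows $\| \rh \| = 1$, so $\rh$ is a tracial state, as required. Note that $\ta \neq \ta \circ P$ in general (for instance when $A = \C$ and $\af = \id$), so this step genuinely uses the hypothesis.

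The main work is the vanishing statement. Using the trace property and passing to adjoints reduces it to the case $n \geq 1$, and we may assume $\| a \| \leq 1$. Fix $\ep > 0$. I would invoke the definition of finite Rokhlin dimension (\cite[Definition~1.21]{HP}), with $d$ the Rokhlin dimension of~$\af$, applied to the finite set $F = \{ a, a^* \}$, to a tolerance $\dt > 0$ to be determined, and to a height parameter taken larger than~$n$. This produces positive contractions $f_i^{(l)} \in A$, for $0 \leq l \leq d$ and $0 \leq i \leq p_l - 1$, where each $p_l > n$, such that the $f_i^{(l)}$ are within $\dt$ of commuting with $a$ and~$a^*$, such that $\| f_i^{(l)} f_j^{(l)} \| < \dt$ for $i \neq j$, such that $\big\| \big( \sum_{l, i} f_i^{(l)} \big) a - a \big\| < \dt$, and such that $\| \af (f_i^{(l)}) - f_{i + 1}^{(l)} \| < \dt$ with indices taken modulo~$p_l$ (with the usual modifications of these relations in the nonunital setting).

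The key move is to replace $a$ by $\big( \sum_{l, i} f_i^{(l)} \big) a$ once, and then, for each resulting summand, to split $f_i^{(l)}$ and use the trace property to write
\[
\ta \big( f_i^{(l)} a u^n \big)
 = \ta \big( (f_i^{(l)})^{1/2} \, a u^n \, (f_i^{(l)})^{1/2} \big)
 = \ta \big( (f_i^{(l)})^{1/2} \, a \, \af^n \big( (f_i^{(l)})^{1/2} \big) \, u^n \big),
\]
using $u^n (f_i^{(l)})^{1/2} = \af^n \big( (f_i^{(l)})^{1/2} \big) u^n$. Since $\af^n ( f_i^{(l)} )$ is within $n \dt$ of $f_{i + n}^{(l)}$ (indices modulo~$p_l$, by telescoping the Rokhlin relation) and the square root is uniformly continuous on $[0, 1]$, each such term is close to
\[
\ta \big( (f_i^{(l)})^{1/2} \, a \, (f_{i + n}^{(l)})^{1/2} \, u^n \big)
 \approx \ta \big( (f_i^{(l)})^{1/2} (f_{i + n}^{(l)})^{1/2} \, a \, u^n \big)
\]
by near-centrality of $f_i^{(l)}$. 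Because the height $p_l$ exceeds~$n$, we have $i + n \not\equiv i$ modulo~$p_l$, so $f_i^{(l)}$ and $f_{i + n}^{(l)}$ are nearly orthogonal, $(f_i^{(l)})^{1/2} (f_{i + n}^{(l)})^{1/2}$ is small in norm, and each summand is small. Summing over the fixed finite collection of pairs $(l, i)$ and collecting errors gives $|\ta (a u^n)| < \ep$ once $\dt$ is small enough; this is legitimate because the number of tower elements is fixed \emph{before} $\dt$ is chosen. As $\ep$ was arbitrary, $\ta (a u^n) = 0$.

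The points requiring care — and the main, though routine, obstacle — are transcribing the precise nonunital form of the Rokhlin relations from \cite[Definition~1.21]{HP} and propagating the auxiliary factors through the square-root and near-commutation estimates; the accumulation of errors is harmless for the reason just noted. The one genuinely essential idea is the splitting $f_i^{(l)} = (f_i^{(l)})^{1/2} \cdot (f_i^{(l)})^{1/2}$, which lets $u^n$ carry one half onto the element $f_{i + n}^{(l)}$ that is nearly orthogonal to the other half, thereby forcing the off-diagonal part of $\ta$ to vanish.
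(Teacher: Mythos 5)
Your proof is correct and follows essentially the same approach as the paper: both split $f_i^{(l)}$ into square roots, use trace cyclicity to move one factor, transport it past $u^n$ via the Rokhlin tower shift, and invoke (near-)orthogonality of $f_i^{(l)}$ and $f_{i+n}^{(l)}$ within a tower of height exceeding $n$ to kill the term. The paper tidies the final step by first strengthening the Rokhlin conditions so that $f_j^{(l)} f_k^{(l)} = 0$ exactly (hence $(f_j^{(l)})^{1/2}(f_k^{(l)})^{1/2} = 0$), which saves you from having to track the loss in passing from $\|f g\|$ small to $\|f^{1/2} g^{1/2}\|$ small for positive contractions; this is purely a matter of bookkeeping, not a different argument.
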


\begin{proof}
Let $d \in \Nz$ be the Rokhlin dimension of~$\af$.
Apply the proof of \cite[Proposition 2.8]{HWZ}
and \cite[Remark 2.9]{HWZ}
to Definition 1.21 of~\cite{HP},
to get the following single tower version
of Rokhlin dimension,
in which $d$ is replaced by $2 d + 1$.
For any finite set $F \subset A$,
any $p > 0$, and any $\ep > 0$, there are positive contractions
$f_{0}^{(l)}, \, f_{1}^{(l)}, \, \ldots, f_{p - 1}^{(l)} \in A$
for $l = 0, 1, \ldots, 2 d + 1$, such that:
\begin{enumerate}
\item\label{6119_Orth}
$\big\| f_{j}^{(l)} f_{k}^{(l)} b \big\| < \ep$
for $l = 0, 1, \ldots, 2 d + 1$,
$j, k  =  0, 1, \ldots, p - 1$ with $j \neq k$,
and all $b \in F$.
\item\label{6119_Sum}
$\left\| \left( \sum_{l = 0}^{2 d + 1}
  \sum_{j = 0}^{p - 1} f_{j}^{(l)} \right) b
 - b \right\| < \ep$
for all $b \in F$.
\item\label{6119_Comm}
$\big\| \big[ f_{j}^{(l)}, b \big] \big\| < \ep$
for $l = 0, 1, \ldots, 2 d + 1$,
$j = 0, 1, \ldots, p - 1$,
and all $b \in F$.
\item\label{6119_Perm}
$\big\| \big( \alpha \big( f_{j}^{(l)} \big)
        - f_{j + 1}^{(l)} \big) b \big\| < \ep$ for
$l = 0, 1, \ldots, 2 d + 1$,
$j = 0, 1, \ldots, p - 2$,
and all $b \in F$.
\item\label{6119_Return}
$\big\| \big( \alpha \big(f_{p - 1}^{(l)} \big)
   - f_{0}^{(l)} \big)b \big\| < \ep$
for $l = 0, 1, \ldots, 2 d + 1$
and all $b \in F$.
\setcounter{TmpEnumi}{\value{enumi}}
\end{enumerate}
The argument of Remark 1.18 of~\cite{HP}
shows that we can replace~(\ref{6119_Orth})
by the stronger condition:
\begin{enumerate}
\setcounter{enumi}{\value{TmpEnumi}}
\item\label{6119_ExOrth}
$f_{k}^{(l)} f_{j}^{(l)} = 0$
for $l = 0, 1, \ldots, 2 d + 1$
and
$j, k  =  0, 1, \ldots, p - 1$ with $j \neq k$.
\setcounter{TmpEnumi}{\value{enumi}}
\end{enumerate}

Let $u$ be the canonical unitary in $M (A \rtimes_{\alpha} \Z)$.
Since $A$ contains an approximate identity for $A \rtimes_{\alpha} \Z$,
the restriction $\tau |_{A}$ has norm 1.
Therefore $\tau |_{A}$ is an $\alpha$-invariant tracial state.
Thus, it suffices to show that $\tau (a u^n) = 0$
for all $a \in A$ and for all $n \in \Z \smallsetminus \{ 0 \}$.
We may assume that $\| a \| \leq 1$.
Since
$a u^{- n} = (u^n a^*)^* = [\af^n (a^*) u^n]^*$,
it suffices to treat the case $n > 0$.
Fix $\ep > 0$;
we prove that $| \ta (a u^n) | < \ep$.

Define
\[
\ep_0 = \frac{\ep}{2 n d + n + 1}.
\]
Then $\ep_0 > 0$.
An argument using polynomial approximations
to the function $\ld \mapsto \ld^{1/2}$ on $[0, \I)$
provides $\dt > 0$
such that $\dt \leq \ep_0$ and
whenever $C$ is a \ca{} and $b, c, x \in C$
satisfy
\[
\| b \| \leq 1,
\quad
\| c \| \leq 1,
\quad
\| x \| \leq 1,
\quad
b \geq 0,
\quad
c \geq 0,
\andeqn
\| b x - x c \| < \dt,
\]
then $\big\| b^{1/2} x - x c^{1/2} \big\| < \ep_0$.
Set $\dt_0 = \dt / (n + 1)$.

Apply the single tower property above
with (\ref{6119_ExOrth}) in place of~(\ref{6119_Orth}),
with $p = n + 1$, with
\[
F = \big\{ a, \, a^*, \, \af^{- 1} (a^*), \, \af^{- 2} (a^*),
   \, \ldots, \, \af^{- (n - 1)} (a^*) \big\},
\]
and with $\dt_0$ in place of~$\ep$,
getting positive contractions
$f_{0}^{(l)}, \, f_{1}^{(l)}, \, \ldots, f_{n}^{(l)} \in A$
as above.
In particular, whenever $j \neq k$
we have $f_{j}^{(l)} f_{k}^{(l)} = 0$,
so
$\big( f_{j}^{(l)} \big)^{1/2} \big( f_{k}^{(l)} \big)^{1/2} = 0$.

In the following estimates,
we interpret all subscripts in expressions $f_{k}^{(l)}$
as elements of $\{ 0, 1, \ldots, n \}$
by reduction modulo $n + 1$.
For $l = 0, 1, \ldots, 2 d + 1$,
for $k = 0, 1, \ldots, p - 1$,
and for $b \in A$,
we have
\[
\big\| \big( \af^n \big( f_{k - n}^{(l)} \big)
        - f_{k}^{(l)} \big) b \big\|
 \leq \sum_{m = 1}^n
   \big\| \af^{n - m} \big( \big(
                 \af \big( f_{k - n + m - 1}^{(l)} \big)
        - f_{k - n + m}^{(l)} \big) \af^{m - n} (b) \big) \big\|.
\]
Putting $b = a^*$,
and using (\ref{6119_Perm}), (\ref{6119_Return}),
and the definition of~$F$,
it follows that
\[
\big\| \left( \af^n \big( f_{k - n}^{(l)} \big)
        - f_{k}^{(l)} \right) a^* \big\| < n \dt_0.
\]
Using
$u^n b u^{- n} = \af^{n} (b)$ for $b \in A$
and taking adjoints,
we get
\[
\big\| a \big( f_{k}^{(l)}
        - u^{n} f_{k - n}^{(l)} u^{- n} \big) \big\| < n \dt_0.
\]
Therefore,
also using~(\ref{6119_Comm}),
\begin{align*}
& \big\| f_{k}^{(l)} a u^n - a u^n f_{k - n}^{(l)} \big\|
\\
& \hspace*{3em} {\mbox{}}
\leq \big\| f_{k}^{(l)} a - a f_{k}^{(l)} \big\| \| u^n \|
  + \big\| a \big( f_{k}^{(l)}
         - u^n f_{k - n}^{(l)} u^{- n} \big) \big\| \| u^n \|
\\
& \hspace*{3em} {\mbox{}}
< \dt_0 + n \dt_0
= \dt.
\end{align*}
Using the choice of $\dt$ and $\big\| f_k^{(l)} \big\| \leq 1$
at the second step,
we now get
\begin{align*}
& \big\| f_k^{(l)} a u^n
- \big( f_k^{(l)} \big)^{1/2} a u^n \big( f_k^{(l)} \big)^{1/2} \big\|
\\
& \hspace*{3em} {\mbox{}}
\leq \big\| \big( f_k^{(l)} \big)^{1/2} \big\|
  \cdot \big\| \big( f_k^{(l)} \big)^{1/2} a u^n
            - a u^n \big( f_k^{(l)} \big)^{1/2} \big\|
   < \ep_0.
\end{align*}
Using the trace property at the second step
and, at the third step,
the fact that $k$ and $k - n$ are not equal modulo $n + 1$,
we now get
\begin{align*}
\big| \tau \big( f_k^{(l)} a u^n \big) \big|
& < \big| \tau \big( \big( f_k^{(l)} \big)^{1/2}
        a u^n \big( f_{k - n}^{(l)} \big)^{1/2} \big) \big| + \ep_0
\\
& = \big| \tau \big( \big(f_{k - n}^{(l)} \big)^{1/2}
        \big(f_k^{(l)} \big)^{1/2} a u^n \big) \big|
      + \ep_0
  = \ep_0.
\end{align*}
Using~(\ref{6119_Sum}) and $\dt_0 \leq \ep_0$,
we therefore get
\[
| \tau (a u^n) |
 < \left| \tau \left ( \sum_{l = 0}^{2 d + 1}
       \sum_{j = 0}^{n} f_{j}^{(l)} a u^n \right ) \right|
    + \dt_0
 < (2 d + 1) n \ep_0 + \ep_0
 = \ep.
\]
This completes the proof.
\end{proof}

\section{Constructing the examples}\label{Sec_Ex}

In this section,
we construct our examples.

\begin{thm}\label{T_5X17_AHEx}
There exist a simple unital separable AH-algebra~$A$
with a unique tracial state
and satisfying $A \cong A \otimes M_{2^{\infty}}$,
and a continuous action $\gamma \colon \T \to \Aut (A)$,
with the following properties:
\begin{enumerate}
\item\label{T_5X17_AHEx_FP}
The fixed point subalgebra $A^{\gamma}$
is not isomorphic to its opposite.
\item\label{T_5X17_AHEx_CP}
The crossed product $A \rtimes_{\gamma} \T$
is not isomorphic to its opposite.
\item\label{T_5X17_AHEx_Eq}
The \ca{} $A$ is not $\T$-equivariantly isomorphic
to its opposite.
\end{enumerate}
\end{thm}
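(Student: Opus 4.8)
The plan is to build $A$ as the crossed product by a $\Z$-action on a continuous-trace algebra coming from a carefully chosen manifold $M$, then dualize. Following \cite{PhX}, I would take $M$ to be the manifold from \cite[Example~3.5]{PhX} with $\pi_1(M) \cong G = \langle a, b \mid a^3 = b^7 = 1, \, aba^{-1} = b^2\rangle$, and form $Y = S^{4n+1} \times M$ for suitable $n$, so that $Y$ is a closed connected orientable manifold whose middle cohomology carries a nondegenerate symmetric form of nonzero signature (this is where \Lem{L_4909_SameSig} is used), and whose fundamental group is $G$, which by \Lem{L_5X17_G} has no automorphism moving the image of $a$ in the abelianization. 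One then chooses a \hme{} $h$ of an infinite compact metrizable space $X$ fibered suitably over (a Cantor-like model built from) $Y$, together with a minimal $\Z$-action, realized on the section algebra $B$ of a continuous-trace field; the key arithmetic input is that one arranges the Dixmier--Douady class and the $K$-theory so that the resulting $B \rtimes \Z$ remembers both the orientation class of $Y$ and the self-homeomorphisms of $M$ in a way that distinguishes $A$ from $A^{\#}$. By \Lem{L_5Y08_QD} the crossed product $A = B \rtimes_\varphi \Z$ is simple and quasidiagonal; tensoring with the CAR algebra and using that $\varphi$ can be taken with finite Rokhlin dimension (so that, by \Lem{lem_dimrok-traces}, the trace is unique and comes from the unique invariant trace downstairs), we get $A \cong A \otimes M_{2^\infty}$ simple, unital, separable, AH with no dimension growth, and with a unique tracial state.

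Next I would introduce the dual circle action. Since $A = B \rtimes_\varphi \Z$, the dual action $\widehat{\varphi}\colon \T \to \Aut(A)$ has fixed point algebra $A^{\widehat{\varphi}} \cong B$, and the crossed product $A \rtimes_{\widehat{\varphi}} \T \cong B \otimes K$ by Takai duality. So part~(\ref{T_5X17_AHEx_FP}) reduces to showing $B$ is not isomorphic to $B^{\op}$, and part~(\ref{T_5X17_AHEx_CP}) reduces to showing $B \otimes K$ is not isomorphic to $B^{\op}\otimes K$, i.e.\ $B$ is not stably isomorphic to its opposite. The heart of the matter, then, is to show that $B$ and $B^{\#}$ (equivalently $B^{\op}$, by \Lem{L_5Y11_Conj}) are not stably isomorphic. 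The continuous-trace algebra $B$ is classified stably by the pair $(X, \delta)$ where $\delta \in H^3(X;\Z)$ is its Dixmier--Douady invariant, and $B^{\#}$ has spectrum $X$ with class $-\delta$. I would engineer $X$ (via the manifold $Y$) so that there is \emph{no} \hme{} of $X$ reversing $\delta$: this is exactly where the nonzero signature of $Y$ forces any degree-one map to be orientation preserving (\Lem{L_4909_SameSig}, combined with \Lem{L_4909_IsoAfterSn} to pass from $Y = S^{4n+1}\times M$ down to $M$ and back), and where the rigidity of $G$ from \Lem{L_5X17_G} kills the remaining candidate self-maps. Any isomorphism $B \otimes K \to B^{\#}\otimes K$ would induce such a forbidden \hme.

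For~(\ref{T_5X17_AHEx_Eq}): a $\T$-equivariant isomorphism $(A,\widehat\varphi) \cong (A^{\#}, \widehat\varphi^{\#})$ would restrict to an isomorphism of fixed point algebras $A^{\widehat\varphi} \cong (A^{\#})^{\widehat\varphi^{\#}}$, i.e.\ $B \cong B^{\#}$ (even without stabilizing), which is already ruled out by~(\ref{T_5X17_AHEx_FP}); alternatively it would induce $A \rtimes_{\widehat\varphi} \T \cong A^{\#} \rtimes_{\widehat\varphi^{\#}} \T$, which is~(\ref{T_5X17_AHEx_CP}). So~(\ref{T_5X17_AHEx_Eq}) is formally the weakest of the three and follows from either of the others once the compatibility of $\op$, $\#$, crossed products and dual actions discussed in the introduction is invoked. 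The main obstacle I anticipate is not the duality bookkeeping but the \emph{geometric construction of $X$}: one must simultaneously (a) make $h$ minimal on an infinite compact metrizable space so \Lem{L_5Y08_QD} applies, (b) keep enough of the topology of $Y$ visible in $X$ that $H^3(X;\Z)$ detects $\delta$ versus $-\delta$, and (c) ensure the relevant cohomological/orientation obstruction genuinely survives — i.e.\ that $\delta \neq -\delta$ and no self-\hme{} of $X$ negates it. Balancing minimality (which tends to destroy classical topology) against the need to retain the signature and $\pi_1$-rigidity obstructions is the delicate point, and it is presumably handled by building $X$ as a mapping-torus-type or suspension-type space over $Y$ and arranging the action by an irrational-rotation-like construction on an added Cantor factor, so that the cohomology in degree $3$ is inherited from $Y$ while minimality comes from the Cantor direction.
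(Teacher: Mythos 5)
Your high-level outline is broadly aligned with the paper's proof (manifold from \cite[Example~3.5]{PhX}, beef up with a sphere, build a continuous-trace algebra with a nontrivial Dixmier--Douady class, cross by $\Z$, dualize, tensor with the CAR algebra), but there are three genuine gaps.

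First, the signature argument is applied to $M$ itself, not to $Y = S^{n}\times M$. The product of a sphere of positive dimension with anything has signature zero, and for odd~$n$ (which the paper needs, see below) $S^n\times M$ is not even $4k$-dimensional, so \Lem{L_4909_SameSig} cannot be invoked for~$Y$. The paper's actual route is: apply \Lem{L_4909_IsoAfterSn} to reduce a self-map of $S^n\times M$ inducing homology and $\pi_1$ isomorphisms to a self-map of $M$ with the same properties, then apply \Lem{L_4909_SameSig} and \Lem{L_5X17_G} \emph{to $M$} to conclude that the relevant class $\eta\in H^3(M;\Z)\cong\Z/3\Z$ is fixed, and hence so is $\eta_0\in H^3(S^n\times M;\Z)$.

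Second, your proposal for producing a minimal homeomorphism is wrong in approach. You speculate about mapping-torus or Cantor-factor constructions ``so that the cohomology in degree $3$ is inherited from $Y$ while minimality comes from the Cantor direction.'' This is not what is done, and it is unclear how to make such a construction keep the spectrum a manifold (which you need for the continuous-trace field and for the Dixmier--Douady obstruction to survive). The paper instead observes that the manifold $M$ itself admits no minimal homeomorphism (Remark~\ref{rmk:why-we-need-large-sphere}, via the Lefschetz fixed point theorem), which is precisely why one crosses with an odd sphere $S^n$, $n\geq 5$: then $S^n\times M$ admits a free $\T$-action, and one quotes the Fathi--Herman theorem~\cite{FH} to obtain a uniquely ergodic minimal diffeomorphism of $S^n\times M$ homotopic to the identity. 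Because it is homotopic to the identity, it pulls back the bundle~$E$ to itself, so one can lift it to an automorphism of $\Gamma(E)$. This is the step your proposal does not supply.

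Third, and most seriously, you do not address the heart of Theorem~\ref{T_5X17_AHEx}: why does the obstruction survive tensoring with $M_{2^\infty}$? Your proposal establishes (correctly in spirit) that $\Gamma(E)$ is not stably isomorphic to its opposite because the Dixmier--Douady class $\eta_0$ cannot be negated by a self-homeomorphism of $S^n\times M$. But after tensoring with $M_{2^\infty}$ the relevant bundle invariant is no longer the integral Dixmier--Douady class: one needs the Dadarlat--Pennig generalized Dixmier--Douady theory for bundles with fiber $M_{2^\infty}\otimes K$, where the classifying invariant lies in $\overline{E}^1_{D}(X)$ and the degree-$3$ piece is $H^3(X;\Z[1/2])$. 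Since $\eta_0$ is $3$-torsion, it is a priori entirely possible that its image under $H^3(X;\Z)\to H^3(X;\Z[1/2])$ vanishes or otherwise becomes negatable, in which case $\Gamma(E)\otimes M_{2^\infty}$ \emph{would} be stably isomorphic to its opposite and the theorem would fail. The paper's proof devotes significant effort to the commuting diagram relating $\overline{E}^1_{\C}(X)$ and $\overline{E}^1_{D}(X)$ via the edge maps of the Atiyah--Hirzebruch spectral sequence, and to showing the coefficient map $t\colon H^3(X;\Z)\to H^3(X;\Z[1/2])$ is injective (because $H^3(X;\Z)\cong\Z/3\Z$ has no $2$-torsion), so that $T$ is injective and the obstruction survives. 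Your proposal contains none of this, and without it the AH/CAR-absorbing version of the theorem is not established.

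The duality bookkeeping in your proposal (fixed point algebra, Takai duality, and the deduction of part~(\ref{T_5X17_AHEx_Eq}) from part~(\ref{T_5X17_AHEx_FP})) is correct and matches the paper.
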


\begin{thm}\label{T_5X17_KbgEx}
There exist a unital Kirchberg algebra $B$
satisfying the Universal Coefficient Theorem,
and a continuous action $\gamma \colon \T \to \Aut (B)$,
with the following properties:
\begin{enumerate}
\item\label{T_5X17_KbgEx_FP}
The fixed point subalgebra $B^{\gamma}$
is not isomorphic to its opposite.
\item\label{T_5X17_KbgEx_CP}
The crossed product $B \rtimes_{\gamma} \T$
is not isomorphic to its opposite.
\item\label{T_5X17_KbgEx_Eq}
The \ca{} $B$ is not $\T$-equivariantly isomorphic
to its opposite.
\end{enumerate}
\end{thm}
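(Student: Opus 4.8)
The plan is to imitate the construction and argument used to prove \Thm{T_5X17_AHEx}, realizing the circle action as a dual action and replacing the tensor factor $M_{2^{\infty}}$ by $\mathcal{O}_\infty$. Thus I would start from the unital continuous trace \ca{} $E$ over the infinite compact metrizable space $X$, together with $\bt \in \Aut(E)$ inducing a minimal homeomorphism $h$ of $X$, coming from the proof of \Thm{T_5X17_AHEx}: here $X$ is built from the manifold of \Lem{L_5X17_G}-type and high-dimensional spheres via \Lem{L_4909_SameSig} and \Lem{L_4909_IsoAfterSn}, the Dixmier--Douady class $\dt \in H^3(X; \Z)$ is a nonzero element of order~$21$, and --- this is the crucial topological output --- no homeomorphism $\ph$ of $X$ satisfies $\ph^*\dt = -\dt$. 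I would then put $D = E \otimes \mathcal{O}_\infty$ with the automorphism $\bt \otimes \id_{\mathcal{O}_\infty}$, set $B = D \rtimes_{\bt \otimes \id} \Z$, and let $\gm = \widehat{\bt \otimes \id} \colon \T \to \Aut(B)$ be the dual action, so that $B^{\gm} = D$ and, by Takai duality, $B \rtimes_{\gm} \T \cong D \otimes K$, where $K$ is the algebra of compact operators on $\ell^2(\Z)$.

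First I would check that $B$ is a unital Kirchberg algebra in the UCT class. Since $h$ is minimal and $X$ is infinite, $E \rtimes_{\bt} \Z$ is simple (by the corollary to Theorem~1 of~\cite{AS}, as in \Lem{L_5Y08_QD}); hence $B \cong (E \rtimes_{\bt} \Z) \otimes \mathcal{O}_\infty$ is simple, separable, nuclear, unital and $\mathcal{O}_\infty$-absorbing, and therefore purely infinite. It lies in the UCT class because $E$ is type~I, the UCT class is closed under tensoring with the nuclear UCT algebra $\mathcal{O}_\infty$ and is closed under crossed products by~$\Z$. I would not need the quasidiagonality statement of \Lem{L_5Y08_QD} nor the trace analysis of \Lem{lem_dimrok-traces}: those matter only in the stably finite situation of \Thm{T_5X17_AHEx}.

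Next I would derive the three assertions from the fact that no homeomorphism $\ph$ of $X$ satisfies $\ph^*\dt = -\dt$. For~(\ref{T_5X17_KbgEx_FP}): $B^{\gm} = E \otimes \mathcal{O}_\infty$, whose opposite is $E^{\op} \otimes \mathcal{O}_\infty$ (using $\mathcal{O}_\infty \cong \mathcal{O}_\infty^{\op}$, which holds by the Kirchberg--Phillips classification). As $\mathcal{O}_\infty$ is simple, the primitive ideal space of $E \otimes \mathcal{O}_\infty$ is $X$, so an isomorphism onto $E^{\op} \otimes \mathcal{O}_\infty$ would induce a homeomorphism $\ph$ of $X$ matching the two continuous-field structures; since $K_0(\mathcal{O}_\infty) \cong \Z$ is torsion free, tensoring with $\mathcal{O}_\infty$ does not annihilate the torsion Dixmier--Douady class, so $\ph$ would have $\ph^*(-\dt) = \dt$, which is impossible. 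At this point I would invoke the Dadarlat--Pennig Dixmier--Douady theory for strongly self-absorbing \ca{s}, or else adapt the argument already used for $M_{2^{\infty}}$ in \Thm{T_5X17_AHEx}. Part~(\ref{T_5X17_KbgEx_CP}) is the same, applied to $B \rtimes_{\gm} \T \cong E \otimes \mathcal{O}_\infty \otimes K$, using that the torsion Dixmier--Douady class is a Morita invariant and that a stable isomorphism also induces such a $\ph$. Finally~(\ref{T_5X17_KbgEx_Eq}) is formal: a $\T$-equivariant isomorphism $(\T, B, \gm) \cong (\T, B^{\#}, \gm^{\#})$ restricts to an isomorphism $B^{\gm} \cong (B^{\#})^{\gm^{\#}} = (B^{\gm})^{\#} \cong (B^{\gm})^{\op}$ of fixed point algebras, the last step by \Lem{L_5Y11_Conj}, contradicting~(\ref{T_5X17_KbgEx_FP}); and $(\T, B^{\op}, \gm^{\op}) \cong (\T, B^{\#}, \gm^{\#})$ as noted in the introduction, so $B$ is not $\T$-equivariantly isomorphic to $B^{\op}$ either.

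The content genuinely new relative to \Thm{T_5X17_AHEx} is small: the elaborate topological construction of $(X, h, \dt)$ with the rigidity $\ph^*\dt \ne -\dt$ is reused unchanged, and the only additional verifications are that $\mathcal{O}_\infty$-stabilization preserves this obstruction (immediate from torsion freeness of $K_0(\mathcal{O}_\infty)$) and that $B$ is a Kirchberg algebra in the UCT class (routine). Accordingly, I expect the real difficulty to lie not in the present argument but in the shared construction behind \Thm{T_5X17_AHEx} --- exhibiting a compact metrizable $X$ carrying simultaneously a minimal homeomorphism and a nonzero torsion class $\dt \in H^3(X; \Z)$ with no self-homeomorphism sending $\dt$ to $-\dt$ --- and, for the step used here, in making precise that this class, and its role as an isomorphism invariant, survives tensoring with $\mathcal{O}_\infty$ (which is where the Dadarlat--Pennig theory is used).
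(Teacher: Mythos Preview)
Your proposal is essentially the paper's own proof: the paper constructs $A_0 = \Gm(E) \rtimes_{\af} \Z$, takes $B = A_0 \otimes {\mathcal{O}}_{\infty}$ with the tensor product of the dual action and the trivial action (which is the same as your $B = (E \otimes {\mathcal{O}}_{\infty}) \rtimes_{\bt \otimes \id} \Z$ with its dual action), and derives (\ref{T_5X17_KbgEx_FP})--(\ref{T_5X17_KbgEx_Eq}) exactly as you outline, invoking \cite[Corollary~4.9]{paper:DadarlatPennigSpectra} for the ${\mathcal{O}}_{\infty}$-stability of the obstruction. Two small corrections: the Dixmier--Douady class has order~$3$, not~$21$ (the group $G$ of \Lem{L_5X17_G} has $21$ elements, but its abelianization---hence $H_1(M;\Z)$ and then $H^3(S^n\times M;\Z)$---is $\Z/3\Z$); and the survival of the obstruction under tensoring with ${\mathcal{O}}_{\infty}$ is not literally ``immediate from torsion freeness of $K_0({\mathcal{O}}_{\infty})$''---that intuition is on the right track, but the actual mechanism is the Dadarlat--Pennig result you cite, so lean on that rather than the heuristic.
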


The proofs of Theorem~\ref{T_5X17_AHEx}
and Theorem~\ref{T_5X17_KbgEx}
are the same until nearly the end,
so we prove them together.

\begin{proof}[Proofs of Theorem~\ref{T_5X17_AHEx}
  and Theorem~\ref{T_5X17_KbgEx}]
We start with the compact
connected manifold $M$ used in~\cite[Example~3.5]{PhX},
whose fundamental group can be identified with the group~$G$
of Lemma~\ref{L_5X17_G}
and whose signature is nonzero.
It follows from
\cite[Theorem 2A.1]{hatcher}
that $H_1 (M; \Z) \cong \Z / 3 \Z$,
generated by the image of~$a$
under the map
$\pi_1 (M) \to H_1 (M; \Z)$,
so Poincar\'{e} duality
(\cite[Theorem 3.30]{hatcher})
gives
$H^3 (M; \Z) \cong \Z / 3 \Z$.

Let $\eta \in H^3 (M; \Z)$ be a generator.
We claim that if $h \colon M \to M$ is a continuous map
such that the induced maps
$h_* \colon H_* (M; \Z) \to H_* (M; \Z)$
and $h_* \colon \pi_1 (M) \to \pi_1 (M)$ are isomorphisms,
then $h^* (\eta) = \eta$.

To prove the claim,
note first that the Universal Coefficient Theorem
and the Five Lemma imply that
$h^* \colon  H^* (M; \Z) \to H^* (M; \Z)$ is an isomorphism as well.
By Lemma \ref{L_4909_SameSig},
$h$ is orientation preserving.
Since $h_* \colon \pi_1 (M) \to \pi_1 (M)$ is an automorphism,
and there is no automorphism of $\pi_1 (M)$ which sends $a$ to $a^{-1}$,
it follows by naturality of the Hurweicz map
that $h_* \colon H_1 (M; \Z) \to H_1 (M; \Z)$ is the identity.
Since $h_*$ fixes the orientation class,
it follows by naturality in Poincar\'{e} duality
that $h^*$ is also the identity on $H^3 (M; \Z)$,
as required.

To proceed,
we would have liked to have a minimal homeomorphism of~$M$.
In Remark~\ref{rmk:why-we-need-large-sphere} below,
we explain why no such homeomorphism exists.
We remedy this situation by giving ourselves more space,
as follows.

Choose an odd integer $n \geq 5$.
Let $\eta_0 \in H^3 (S^n \times M; \, \Z)$
be the product of the standard generator of $H^0 (S^n; \Z)$
and~$\eta$.
Let $h \colon S^n \times M \to S^n \times M$
be a \cfn{}
such that the induced maps
\[
h_* \colon H_* (S^n \times M; \, \Z) \to H_* (S^n \times M; \, \Z)
\]
and
\[
h_* \colon \pi_1 (S^n \times M) \to \pi_1 (S^n \times M)
\]
are isomorphisms.
Applying
Lemma~\ref{L_4909_IsoAfterSn}
and following the notation there,
the induced maps
\[
(p \circ h \circ i)_* \colon H_* (M; \Z) \to H_* (M; \Z)
\andeqn
(p \circ h \circ i)_* \colon \pi_1 (M) \to \pi_1 (M)
\]
are isomorphisms.
By the claim above, $(p \circ h \circ i)^* (\eta) = \eta$.
It follows from the definitions of the maps and of~$\eta_0$
that $h^* (\eta_0) = \eta_0$.

By \cite[Corollary 1.7]{grothendieck},
there exist
$N \in \N$
and a locally trivial \ct{} field $E$ over $S^n \times M$
with fiber $M_N$
whose section algebra $\Gm (E)$ has Dixmier-Douady invariant~$\eta_0$.
We identify $\Prim (\Gm (E))$ with $S^n \times M$ in the obvious way.
Since $\Gm (E)^{\op}$ has Dixmier-Douady invariant $-\eta_0$,
an isomorphism from $\Gm (E) \otimes K$
to $\Gm (E)^{\op} \otimes K$
would induce a homeomorphism from $\Prim (\Gm (E))$
to itself
whose induced action on $H^3 (S^n \times M)$
sends $\eta_0$ to $- \eta_0$.
Since no such homeomorphism exists,
it follows that $\Gm (E)$ is not stably isomorphic
to its opposite algebra.

Since $S^n$ admits a free action of~$\T$,
so does $S^n \times M$.
By \cite[Theorem 1 and Theorem 4]{FH},
there exists a uniquely ergodic minimal diffeomorphism
$h \colon S^n \times M \to S^n \times M$
which is homotopic to the identity.
Thus
$h^* (E) \cong E$.
Therefore there exists an automorphism
$\alpha \colon \Gm (E) \to \Gm (E)$
which induces $h$ on $\Prim (\Gm (E))$.
Set
$A_0 = \Gm (E) \rtimes_{\alpha} \Z$.
Then
$A_0$ is a separable unital
nuclear \ca{} satisfying the Universal Coefficient Theorem.
The algebra $A_0$ is simple and quasidiagonal by Lemma~\ref{L_5Y08_QD}.

We claim that $A_0$ has finite nuclear dimension
and a unique tracial state.
To prove the claim,
use \cite[Corollary 3.10]{kirchberg-winter}
to see that the decomposition rank of $\Gm (E)$
is $\dim (S^n \times M)$, hence finite.
Therefore
$\Gm (E)$ has finite nuclear dimension.
Since the center of $\Gm (E)$ is isomorphic to $C (S^n \times M)$,
it follows that $\alpha|_{Z (\Gm (E))}$
is an automorphism of $C (S^n \times M)$
arising from a minimal homeomorphism.
Therefore,
by \cite[Theorem 6.1]{HWZ} or \cite[Corollary 2.6]{szabo},
$\alpha|_{Z (\Gm (E))}$ has finite Rokhlin dimension.
It follows immediately from the definition of finite Rokhlin dimension
that if $\alpha$ is an automorphism of a \ca~$C$
and the restriction of $\alpha$ to the center of $C$
has finite Rokhlin dimension,
then $\alpha$ has finite Rokhlin dimension as well
(with commuting towers).
Therefore
$A_0$ has finite nuclear dimension
by \cite[Theorem 4.1]{HWZ}.
Since $h$ uniquely ergodic,
$\Gamma (E)$ admits a unique invariant tracial state,
and thus, by Lemma \ref{lem_dimrok-traces},
$A_0$ has a unique tracial state.

The fixed point subalgebra of the dual action $\gamma$ of $\T$ on~$A_0$
is isomorphic to~$\Gm (E)$.
Therefore
it is not isomorphic
to its opposite algebra.
By Takai duality,
the crossed product of $A_0$ by the dual action
is stably isomorphic to $\Gamma (E)$,
and therefore also not isomorphic to its opposite.
In general,
if two \ca{s} with a $G$-action are equivariantly isomorphic,
it follows immediately that
their fixed point subalgebras are isomorphic.
Thus,
$(\T, A_0, \gamma)$ is not equivariantly isomorphic
to its opposite $(\T, A_0^{\op}, \gamma^{\op})$.

The remainder of the proof consists of showing that
the same properties remain
after we tensor everything with $\mathcal{O}_{\infty}$
(for Theorem~\ref{T_5X17_KbgEx})
or with $M_{2^{\infty}}$
(for Theorem~\ref{T_5X17_AHEx}).

For any continuous field $F$ over~$X$
and any nuclear \ca~$D$,
denote by $F \otimes D$
the continuous field whose fiber over $x \in X$
is $F_x \otimes D$.
(This is in fact a continuous field by \cite[Theorem 4.5]{KW1}.)
Suppose that $F_1$ and $F_2$ are two continuous fields over $X$
with fibers $M_N$,
and that
$\Gm ( F_1 \otimes {\mathcal{O}}_{\infty} \otimes K)
 \cong \Gm (F_2 \otimes {\mathcal{O}}_{\infty} \otimes K)$.
Since the fibers of these fields are simple,
it follows that there is a homeomorphism $g \colon X \to X$
such that
$g^* (F_2 \otimes {\mathcal{O}}_{\infty} \otimes K)
  \cong F_1 \otimes {\mathcal{O}}_{\infty} \otimes K$.
Apply \cite[Corollary~4.9]{paper:DadarlatPennigSpectra},
noting that $\C$ is included among
the strongly selfabsorbing \ca{s} there.
(See the beginning of \cite[Section~2.1]{paper:DadarlatPennigSpectra}.)
We conclude that $g^* (F_2) \otimes K \cong F_1 \otimes K$,
so $\Gm (F_2) \otimes K \cong \Gm (F_1) \otimes K$.
Taking $F_1 = E$ and $F_2 = E^{\#}$
(the fiberwise conjugate field,
with fibers $(E^{\#})_x = (E_x)^{\#}$),
the fact that $\Gm (E)$ is not stably isomorphic to its opposite
algebra now gives the second step of the following calculation,
while
$({\mathcal{O}}_{\infty} \otimes K)^{\#}
 \cong {\mathcal{O}}_{\infty} \otimes K$
gives the third step:
\begin{align}\label{Eq_5Y11_IsoOpp}
\Gm (E) \otimes {\mathcal{O}}_{\infty} \otimes K
& \cong \Gm (E \otimes {\mathcal{O}}_{\infty} \otimes K)
\\
& \not\cong \Gm (E^{\#} \otimes {\mathcal{O}}_{\infty} \otimes K)
\notag
\\
& \cong
   \Gm \big( E^{\#} \otimes ({\mathcal{O}}_{\infty}
                                      \otimes K)^{\#} \big)
  \cong \big( \Gm (E) \otimes {\mathcal{O}}_{\infty}
                                      \otimes K \big)^{\#}.
\notag
\end{align}

Set $B = A_0 \otimes {\mathcal{O}}_{\infty}$
and let $\gm \colon \T \to \Aut (B)$
be the tensor product of the dual action on~$A_0$
and the trivial action on~${\mathcal{O}}_{\infty}$.
Then
\[
B \rtimes_{\gm} \T
   \cong \Gm (E) \otimes {\mathcal{O}}_{\infty} \otimes K,
\quad
B^{\#} \rtimes_{\gm^{\#}} \T
 \cong \big( \Gm (E) \otimes
             {\mathcal{O}}_{\infty} \otimes K \big)^{\#},
\]
\[
B^{\gm} \cong \Gm (E) \otimes {\mathcal{O}}_{\infty},
\quad
{\mbox{and}}
\quad
(B^{\#})^{\gm^{\#}} \cong (\Gm (E) \otimes {\mathcal{O}}_{\infty})^{\#}.
\]
So parts (\ref{T_5X17_KbgEx_FP}) and~(\ref{T_5X17_KbgEx_CP})
of Theorem~\ref{T_5X17_KbgEx} follow from~(\ref{Eq_5Y11_IsoOpp})
and Lemma~\ref{L_5Y11_Conj}.
Part~(\ref{T_5X17_KbgEx_Eq}) is now immediate.
Since $B$ is a unital Kirchberg algebra which satisfies the
Universal Coefficient Theorem,
we have proved Theorem~\ref{T_5X17_KbgEx}.

Next we are going to show that
if we tensor $E$ fiberwise with the CAR algebra $M_{2^{\infty}}$,
the section algebra
will still fail to be stably isomorphic to its opposite algebra.

Set $D = M_{2^{\infty}}$
and let ${\overline{E}}^*_D (X)$ be the (reduced) cohomology theory
which arises
as in \cite[Corollary 3.9]{paper:DadarlatPennigSpectra}
from the infinite loop structure
of the classifying space of $\Aut_0 (D \otimes K)$,
the component of the identity
of the automorphism group
of $D \otimes K$.
As in \cite[Corollary 3.9]{paper:DadarlatPennigSpectra},
locally trivial bundles with fiber $D \otimes K$
and structure group $\Aut_0 (D \otimes K)$
over a finite connected CW~complex~$X$
are classified by the group ${\overline{E}}^1_D (X)$.

As in the proof of Corollary~4.8
in~\cite{paper:DadarlatPennigSpectra},
the unital map $\mathbb{C} \to D$
induces a morphism $\Aut_0 (K) \to \Aut_0 (D \otimes K)$ and a natural
transformation of cohomology theories
$T \colon
 {\overline{E}}^*_{\mathbb{C}} (X) \to {\overline{E}}^*_{D} (X)$.
Let $t \colon H^3 (X; {\mathbb{Z}}) \to H^3 (X; \, {\mathbb{Z}} [1/2])$
be the coefficient map induced by
\[
{\mathbb{Z}}
 \stackrel{\cong}{\longrightarrow} \pi_2 (\mathrm{Aut}_0 (K))
 \longrightarrow \pi_2 (\mathrm{Aut}_0 (D \otimes K))
 \stackrel{\cong}{\longrightarrow} K_0 (D)
 \stackrel{\cong}{\longrightarrow} {\mathbb{Z}} [1/2].
\]
Using the naturality of the Atiyah-Hirzebruch spectral sequence,
it was furthermore shown in the proof of Corollary~4.8
in~\cite{paper:DadarlatPennigSpectra}
that there is a commutative diagram
\[
\xymatrix{\overline{E}^1_{\mathbb{C}} (X)
  \ar[r]^{T} \ar[d]_{\overline{\delta}_1}
 & \overline{E}^1_{D} (X)
  \ar[d]^{\overline{\delta}_1}
\\
H^3 (X; {\mathbb{Z}}) \ar[r]^-{t}
 & H^3 (X; \, {\mathbb{Z}} [1/2])}
\]
in which the vertical maps are the edge homomorphisms.
The first vertical map is an isomorphism and it can be
identified with the Dixmier-Douady map.
In particular, $T$~is injective whenever $t$ is injective.

In the case of $X = S^n \times M$ with $M$ and $n$ as above,
$H^3 (X; {\mathbb{Z}}) \cong {\mathbb{Z}} /3 {\mathbb{Z}}$.
Since $X$ is a compact manifold,
its integral cohomology is finitely generated.
It therefore follows
from the cohomology Universal Coefficient Theorem
given for chain complexes
in Theorem~10 in Section~5 of Chapter~5 of~\cite{Sp}
that $t$ is bijective.
Hence the map
$T \colon
 {\overline{E}}^1_{\mathbb{C}} (X) \to {\overline{E}}^1_{D} (X)$
is injective.

Now suppose that $F_1$ and $F_2$ are two continuous fields over $X$
with fibers $M_N$,
and that
$\Gm ( F_1 \otimes D \otimes K) \cong \Gm (F_2 \otimes D \otimes K)$.
As in the argument above for the case $D = {\mathcal{O}}_{\infty}$,
there is a homeomorphism $h \colon X \to X$
such that
$h^* (F_2 \otimes D \otimes K) \cong F_1 \otimes D \otimes K$.
Since ${\overline{E}}^1_{\mathbb{C}} (X) \to {\overline{E}}^1_{D} (X)$
is injective,
it follows that $h^* (F_2 \otimes K) \cong F_1 \otimes K$.
Taking $F_1 = E$ and $F_2 = E^{\#}$ as before,
we deduce as before that
\[
\Gm (E) \otimes D \otimes K
 \not\cong  \big( \Gm (E) \otimes D \otimes K \big)^{\#}.
\]
Now define $A = A_0 \otimes D$
and let $\gm \colon \T \to \Aut (A)$
be the tensor product of the dual action on~$A_0$
and the trivial action on~$D$.
Proceed as before to deduce
parts (\ref{T_5X17_AHEx_FP}), (\ref{T_5X17_AHEx_CP}),
and~(\ref{T_5X17_AHEx_Eq})
of Theorem~\ref{T_5X17_AHEx}.

Since $A = A_0 \otimes M_{2^{\infty}}$,
it follows from \cite[Theorem 6.1]{Matui-Sato}
that $A$ is tracially AF
and from \cite[Corollary 6.1]{Matui-Sato}
that $A$ isomorphic to an AH-algebra
with real rank zero and no dimension growth.
This concludes the proof of Theorem~\ref{T_5X17_AHEx}.
\end{proof}

\begin{cor}\label{C_5Y07_AHEx}
There exist a simple unital separable AH-algebra~$A$
with a unique tracial state
and satisfying $A \cong A \otimes M_{2^{\infty}}$,
and an  automorphism $\alpha \in \Aut (A)$
such that $\alpha^n$ is outer for all $n \neq 0$,
with the following properties:
\begin{enumerate}
\item\label{C_5Y07_AHEx_FP}
The fixed point subalgebra $A^{\alpha}$
is not isomorphic to its opposite.
\item\label{C_5Y07_AHEx_Eq}
The \ca~$A$ is not $\Z$-equivariantly isomorphic
to its opposite.
\end{enumerate}
\end{cor}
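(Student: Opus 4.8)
The plan is to take for $A$ exactly the algebra constructed in the proof of Theorem~\ref{T_5X17_AHEx}, namely $A = A_0 \otimes M_{2^{\infty}}$ with $A_0 = \Gm (E) \rtimes \Z$ the crossed product of $\Gm (E)$ by an automorphism inducing the uniquely ergodic minimal diffeomorphism $h$ of $S^n \times M$, and to manufacture the required $\Z$-action by evaluating the $\T$-action $\gamma$ from that proof at a single well-chosen element of~$\T$. The listed properties of~$A$ (simple, unital, separable, AH with no dimension growth, unique tracial state, $A \cong A \otimes M_{2^{\infty}}$) are already proved there, and I would use the fact, also proved there, that the fixed point algebra $A^{\gamma}$ is isomorphic to $\Gm (E) \otimes M_{2^{\infty}}$, which is not stably isomorphic --- hence, being unital, not isomorphic --- to its opposite.

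To pass from $\T$ to $\Z$: let $\Lambda \subset \T$ be the group of continuous eigenvalues of $(S^n \times M, h)$, that is, the set of $\lambda$ for which $f \circ h = \lambda f$ has a nonzero solution $f \in C (S^n \times M)$. Since $h$ is minimal and uniquely ergodic, continuous eigenfunctions are $L^2$-eigenfunctions of the invariant measure, and those attached to distinct eigenvalues are orthogonal, so $\Lambda$ is countable. I would then pick $\theta \in \R$ so that $z := e^{2 \pi i \theta}$ is not a root of unity and $z^n \notin \Lambda$ for all $n \in \Z \SM \{ 0 \}$ --- possible since only countably many $\theta$ fail one of these --- and set $\alpha = \gamma_z \in \Aut (A)$. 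Because $\{ z^n : n \in \Z \}$ is dense in~$\T$ and $t \mapsto \gamma_t (a)$ is norm continuous, a point of~$A$ is fixed by~$\alpha$ precisely when it is fixed by all of~$\gamma$, so $A^{\alpha} = A^{\gamma} \cong \Gm (E) \otimes M_{2^{\infty}}$. Since this is not isomorphic to its opposite, we get part~(\ref{C_5Y07_AHEx_FP}); and part~(\ref{C_5Y07_AHEx_Eq}) is then formal, since a $\Z$-equivariant isomorphism $(\Z, A, \alpha) \to (\Z, A^{\op}, \alpha^{\op})$ would restrict to an isomorphism $A^{\alpha} \to (A^{\alpha})^{\op}$, which does not exist.

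The main obstacle I expect is proving that $\alpha^n = \gamma_{z^n}$ is outer for every $n \neq 0$; here is the intended argument. Suppose $\gamma_{z^n} = \Ad (w)$ for a unitary $w \in A$. Since the dual action fixes $\Gm (E) \subset A_0$ pointwise and $M_{2^{\infty}}$ carries the trivial action, $\gamma_{z^n}$ fixes $\Gm (E) \otimes M_{2^{\infty}}$ pointwise, so $w \in (\Gm (E) \otimes M_{2^{\infty}})' \cap A$. One checks that this relative commutant equals $C (S^n \times M) \otimes 1$: commuting with $1 \otimes M_{2^{\infty}}$ forces $w \in A_0 \otimes 1$, commuting with $\Gm (E) \otimes 1$ then forces $w \in (\Gm (E)' \cap A_0) \otimes 1$, and $\Gm (E)' \cap A_0 = Z (\Gm (E)) = C (S^n \times M)$ because minimality of~$h$ makes the nonzero Fourier coefficients of any element of $A_0$ commuting with $Z (\Gm (E))$ vanish. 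Writing $w = g \otimes 1$ with $g \colon S^n \times M \to \T$ continuous, and using the canonical unitary $u \in A_0$ (which satisfies $\gamma_{z^n} (u) = z^n u$ and acts on $C (S^n \times M) = Z (\Gm (E))$ by the automorphism induced by~$h$), the identity $\Ad (w) (u \otimes 1) = z^n (u \otimes 1)$ reduces to $g$ being a continuous eigenfunction of~$h$ with eigenvalue $z^{\pm n}$; since $\Lambda$ is a group, this contradicts the choice of~$\theta$. Hence every nonzero power of~$\alpha$ is outer, and the construction is complete. Thus the only genuinely new work is this relative commutant identification together with the eigenvalue obstruction; the rest is inherited from the proof of Theorem~\ref{T_5X17_AHEx}.
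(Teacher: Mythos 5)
Your proposal is correct, and on the one genuinely nontrivial point --- outerness of every $\alpha^n$ --- it takes a different route from the paper. The paper proves a general abstract Lemma (\ref{L_5Y08_Outer}): for any separable unital $C^*$-algebra with a faithful $\alpha$-invariant tracial state, the dual action on the crossed product is outer at all but countably many points of $\T$; the proof is a GNS-representation argument, with the bad circle points arising as eigenvalues of the unitary implementing $\alpha$ on the GNS Hilbert space. That lemma, applied to $\Gm(E)$ with the trace coming from the unique $h$-invariant measure, gives outerness of $\gamma_\zeta$ on $A_0 = \Gm(E) \rtimes \Z$; the paper then passes to $A = A_0 \otimes M_{2^\infty}$ by citing Wassermann's theorem that tensoring an outer automorphism with any automorphism yields an outer automorphism. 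You instead argue directly on $A$: you identify $(\Gm(E) \otimes M_{2^\infty})' \cap A$ with $C(S^n \times M) \otimes 1$ (via the standard Fourier-coefficient computation and minimality of $h$, plus the fact that the relative commutant of $1 \otimes M_{2^\infty}$ in $A_0 \otimes M_{2^\infty}$ is $A_0 \otimes 1$), so any unitary implementing $\gamma_{z^n}$ has the form $g \otimes 1$ with $g \colon X \to \T$, and the identity $\Ad(w)(u \otimes 1) = z^n (u \otimes 1)$ then unwinds to $g \circ h = z^n g$. Thus innerness of $\gamma_{z^n}$ would make $z^n$ a continuous eigenvalue of $h$, which you rule out by choosing $z$ outside the (countable) group $\Lambda$ of continuous eigenvalues. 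Both arguments reduce to avoiding a countable subset of $\T$; yours identifies that set concretely as the continuous point spectrum of the minimal system, while the paper's works with the $L^2$-spectrum via GNS. Your version is more self-contained (it avoids both Lemma~\ref{L_5Y08_Outer} and the appeal to Wassermann's tensor-product theorem) and is tailored to the specific structure of $A$; the paper's lemma is stated at a level of generality that serves both the AH and the Kirchberg examples uniformly. The remaining steps of your proposal --- setting $\alpha = \gamma_z$, using density of $\{z^n\}$ to get $A^\alpha = A^\gamma \cong \Gm(E) \otimes M_{2^\infty}$, and observing that a $\Z$-equivariant isomorphism with the opposite would restrict to one on fixed points --- coincide with the paper's. (One very minor remark: "not stably isomorphic hence not isomorphic" needs no unitality hypothesis; the implication is formal.)
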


\begin{cor}\label{C_5Y07_KbgEx}
There exist a unital Kirchberg algebra $B$
satisfying the Universal Coefficient Theorem
and an automorphism $\alpha \in \Aut (B)$
such that $\alpha^n$ is outer for all $n\neq 0$,
with the following properties:
\begin{enumerate}
\item\label{C_5Y07_KbgEx_FP}
The fixed point subalgebra $B^{\alpha}$
is not isomorphic to its opposite.
\item\label{C_5Y07_KbgEx_Eq}
The \ca~$B$ is not $\Z$-equivariantly isomorphic
to its opposite.
\end{enumerate}
\end{cor}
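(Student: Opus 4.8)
The plan is to produce the required $\Z$-action simply by restricting the circle action $\gamma$ of Theorem~\ref{T_5X17_KbgEx} to a dense copy of~$\Z$. I would fix an irrational real number~$\theta$, put $v_0 = e^{2\pi i\theta}$, and set $\alpha = \gamma_{v_0} \in \Aut(B)$, where $(B,\gamma)$ is as in Theorem~\ref{T_5X17_KbgEx}. Since no power $v_0^n$ with $n \neq 0$ is a root of unity, the outerness of $\alpha^n = \gamma_{v_0^n}$ for all $n \neq 0$ will follow once I show that $\gamma_v$ is outer whenever $v \in \T$ is not a root of unity; that is the one nontrivial point. The remaining assertions are then bookkeeping. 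Because $\{v_0^n : n \in \Z\}$ is dense in~$\T$ and each orbit map $t \mapsto \gamma_t(b)$ is norm continuous, an element of $B$ fixed by $\alpha$ is fixed by all of~$\gamma$, so the fixed point algebra of the generated $\Z$-action is exactly $B^{\gamma}$, which is not isomorphic to its opposite by Theorem~\ref{T_5X17_KbgEx}(\ref{T_5X17_KbgEx_FP}); this gives~(\ref{C_5Y07_KbgEx_FP}). For~(\ref{C_5Y07_KbgEx_Eq}), an equivariant isomorphism $(\Z, B, \alpha) \cong (\Z, B^{\op}, \alpha^{\op})$ would restrict to an isomorphism $B^{\alpha} \cong (B^{\alpha})^{\op}$, contradicting~(\ref{C_5Y07_KbgEx_FP}). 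Since $B$ is a unital Kirchberg algebra satisfying the UCT by Theorem~\ref{T_5X17_KbgEx}, this finishes the proof, and Corollary~\ref{C_5Y07_AHEx} is proved word for word the same way, starting from Theorem~\ref{T_5X17_AHEx} and using that its fixed point algebra $\Gm(E) \otimes M_{2^{\infty}}$ again has center $C(S^n \times M)$.

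So the work is to show $\gamma_v$ is outer for $v$ not a root of unity. I would use the structure recorded in the proof of Theorem~\ref{T_5X17_KbgEx}: $B = A_0 \otimes \mathcal{O}_{\infty}$ where $A_0 = \Gm(E) \rtimes \Z$ is the crossed product by the minimal automorphism constructed there, $\gamma$ is the corresponding dual action on~$A_0$ tensored with $\id_{\mathcal{O}_{\infty}}$, $B^{\gamma} \cong \Gm(E) \otimes \mathcal{O}_{\infty}$, and the underlying homeomorphism $h$ of $S^n \times M = \Prim(\Gm(E))$ is minimal. Suppose $\gamma_v = \Ad V$; as $B$ is unital, $V$ is a unitary of~$B$, and then $\gamma_{v^k}(V) = V V V^{-1} = V$ for all~$k$, so density of $\{v^k\}$ in~$\T$ forces $V \in B^{\gamma}$. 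Since $\Ad V$ is trivial on $B^{\gamma}$, in fact $V \in Z(B^{\gamma})$, and because $\mathcal{O}_{\infty}$ is simple, $Z(B^{\gamma}) = Z(\Gm(E)) \otimes \C 1 \cong C(S^n \times M)$. Thus $V = f \otimes 1$ for a unitary $f \in C(S^n \times M)$, and restricting the identity $\gamma_v = \Ad(f \otimes 1)$ to $A_0 \otimes \C 1 \cong A_0$ shows that the dual automorphism at~$v$ equals $\Ad f$ on~$A_0$. Evaluating on the canonical unitary $u$ of the crossed product, using $u \phi u^{*} = \phi \circ h^{-1}$ for $\phi \in C(S^n \times M)$ and that the dual action scales~$u$ by~$v$, I obtain the cocycle equation $f \circ h = v f$.

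The main obstacle is then to rule out a continuous unimodular solution of $f \circ h = v f$ with $v \neq 1$; this is where the geometry of the base enters once more. The key computation is that $H^{1}(S^n \times M; \Z) = 0$: indeed $H^{1}(S^n; \Z) = 0$ for $n \geq 2$, and $H^{1}(M; \Z) = \operatorname{Hom}(H_{1}(M; \Z), \Z) = \operatorname{Hom}(\Z/3\Z, \Z) = 0$, so the K\"{u}nneth theorem gives the vanishing. Hence $f = e^{2\pi i \xi}$ for some $\xi \in C(S^n \times M; \R)$, and writing $v = e^{2\pi i \varphi}$, the relation $f \circ h = v f$ says that $\xi \circ h - \xi - \varphi$ is a continuous $\Z$-valued function on the connected space $S^n \times M$, hence a constant $k \in \Z$. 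Integrating against an $h$-invariant Borel probability measure kills the left-hand side, so $\varphi + k = 0$ and $v = 1$, contradicting that $v$ is not a root of unity. This completes the outerness argument. I expect this final step — the vanishing of $H^{1}(S^n \times M; \Z)$, which makes the ``winding number'' obstruction to the cocycle equation disappear and thereby prevents $\gamma_v$ from being inner — to be the only place requiring genuine care; everything else is a direct unwinding of the $\T$-equivariant statement.
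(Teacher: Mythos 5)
Your proof is correct, and the framework (restrict $\gamma$ to the $\Z$-orbit of an irrational angle, observe that density plus norm continuity gives $B^{\alpha}=B^{\gamma}$, and that an equivariant isomorphism would descend to the fixed point algebras) matches the paper's proof of the corollary exactly. Where you diverge is in the outerness argument, and there the approaches are genuinely different. The paper proves a general-purpose Lemma~\ref{L_5Y08_Outer}: for a separable unital $A$ with a faithful $\af$-invariant trace, the GNS representation produces a unitary $s$ implementing $\af$, and inner dual automorphisms $\gm_\ld$ force $\ov{\ld}$ to be an eigenvalue of~$s$, so $\gm_\ld$ is outer for all but countably many~$\ld$; this is applied with $A=\Gm(E)$ and the trace coming from the unique invariant measure, and outerness is then transported to $B = A_0 \otimes \mathcal{O}_\infty$ (or $A_0\otimes M_{2^\infty}$) via Wassermann's theorem on tensor products with outer automorphisms, together with a cardinality bookkeeping to find $\zt$ with \emph{all} powers outside the bad sets. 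Your argument instead works directly inside $B$: if $\gm_v=\Ad V$ and $\{v^k\}$ is dense, you push $V$ into $Z(B^\gamma)\cong C(S^n\times M)$, extract the cocycle equation $f\circ h = v f$, and use $H^1(S^n\times M;\Z)=0$ (from $H_1(M;\Z)\cong\Z/3\Z$) to lift $f$ and integrate against the invariant measure to force $v=1$.

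What each route buys: the paper's lemma is a reusable statement with clean hypotheses (separable, unital, faithful invariant trace) that says nothing about the structure of the coefficient algebra, at the cost of ceding control over exactly which $\ld$ are bad and of needing the auxiliary tensor-product step. Your argument is tailored to the situation but gives a sharper dichotomy (outer for every $v$ that is not a root of unity) in one pass on $B$ itself, avoiding both the GNS machinery and Wassermann's theorem; the price is that it uses the vanishing of $H^1$ of the base, a fact special to this example rather than a structural hypothesis. One small notational slip: in deriving $V\in B^\gamma$ you write $\gm_{v^k}(V)=VVV^{-1}=V$, but the correct computation for general $k$ is $\gm_{v^k}(V)=(\Ad V^k)(V)=V^kVV^{-k}=V$; the conclusion is of course the same. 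Your claim that $Z(B^\gamma)\cong C(S^n\times M)\otimes\C 1$ is also used, and is fine because $\mathcal{O}_\infty$ (or $M_{2^\infty}$) is simple with trivial center, but it is worth stating since that is where the structure of $B^\gamma$ enters.
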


\begin{proof}[Proofs of Corollary~\ref{C_5Y07_AHEx}
  and Corollary~\ref{C_5Y07_KbgEx}]
The proofs of both corollaries are the same.
Let $\gamma \colon \T \to \Aut (A)$
or $\gamma \colon \T \to \Aut (B)$
be the circle action from Theorem~\ref{T_5X17_AHEx}
or Theorem~\ref{T_5X17_KbgEx} as appropriate.
Let $\zt \in \T$ be an irrational angle,
so that $\Z \cdot \zt$ is dense in $\T$.
Set $\alpha = \gamma_{\zt}$.
Then $A^{\gamma} = A^{\alpha}$
or $B^{\gamma} = B^{\alpha}$.
If $\alpha$ is chosen suitably, then $\alpha^n$ will be outer for all $n\neq 0$.
Such choices exist by Lemma~\ref{L_5Y08_Outer}.
\end{proof}

In these corollaries,
we do not claim that the crossed products are not isomorphic.
In particular,
for the actions used
in the proof of Corollary~\ref{C_5Y07_KbgEx},
we will show that the crossed products
actually are at least sometimes isomorphic;
probably this is true in general.
We need a lemma, which state in greater generality than we need.

%\begin{lem}\label{L_5Y08_Outer}
%Let $X$ be a \cms,
%let $n \in \N$,
%let $E$ be a locally trivial bundle over~$X$ with fiber~$M_n$,
%let $h \colon X \to X$ be a \mh,
%and let $\af \in \Aut (\Gm (E))$
%be an automorphism which induces the map $h$ on~$\Prim (A)$.
%Let $\gm \colon \T \to \Aut ( \Gm (E) \rtimes_{\af} \Z )$
%be the dual action on the crossed product.
%Then for all but countably many $\ld \in \T$,
%the automorphism $\gm_{\ld}$ is outer.
%\end{lem}

\begin{lem}\label{L_5Y08_Outer}
Let $A$ be a separable unital \ca.
Let $\alpha \in \Aut (A)$.
Suppose $A$ has a faithful invariant tracial state~$\tau$.
Let $\gm \colon \T \to \Aut ( A \rtimes_{\af} \Z )$
be the dual action on the crossed product.
Then for all but countably many $\ld \in \T$,
the automorphism $\gm_{\ld}$ is outer.
\end{lem}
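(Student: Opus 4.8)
The plan is to prove that the set $\mathcal{I} = \{\ld \in \T : \gm_\ld \text{ is inner}\}$ is countable. Write $C = A \rtimes_{\af} \Z$ and let $u \in C$ be the canonical unitary, so that $\gm_\ld$ fixes $A$ pointwise and sends $u$ to $\ld u$. The roots of unity in~$\T$ form a countable set, so it is enough to show that $\mathcal{I}$ contains only countably many elements that are \emph{not} roots of unity.

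First I would convert an inner $\gm_\ld$ into an eigenvalue relation inside~$A$. Suppose $\gm_\ld = \Ad (v)$ for a unitary $v \in C$, with $\ld$ not a root of unity. Then $\gm_\ld (v) = v v v^* = v$, so $v$ is fixed by~$\gm_\ld$. Using the spectral projections $E_k (w) = \int_{\T} \mu^{-k} \gm_\mu (w) \, d \mu$ (Haar measure on~$\T$), a change of variables gives $E_k (\gm_\ld (w)) = \ld^k E_k (w)$ for all $w \in C$ and $k \in \Z$; applied to $w = v$ this yields $(1 - \ld^k) E_k (v) = 0$, hence $E_k (v) = 0$ for every $k \neq 0$, since $\ld^k \neq 1$. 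An element of $C$ all of whose Fourier coefficients $E_k$ vanish must be zero (pairing with any state $\phi$, the continuous function $\mu \mapsto \phi (\gm_\mu (w))$ on $\T$ has all Fourier coefficients equal to zero, hence vanishes identically, in particular at $\mu = 1$); applying this to $v - E_0 (v)$ gives $v = E_0 (v) \in A$. Finally, multiplying $v u v^* = \ld u$ on the right by $u^*$ and using $u v^* u^* = \af (v^*)$ gives $v \af (v^*) = \ld$, that is, $\af (v) = \ov{\ld} v$ with $v \in A$ a unitary.

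For the last step I would pass to the GNS Hilbert space $L^2 (A, \ta)$, which is separable because $A$ is, and on which $\af$ induces a unitary operator $U_\af$ because $\ta$ is $\af$-invariant. Since $\ta$ is faithful, the image $\hat{v}$ of $v$ in $L^2 (A, \ta)$ is nonzero (indeed $\| \hat{v} \|_2 = 1$), and $U_\af \hat{v} = \ov{\ld} \hat{v}$. Thus every $\ld \in \mathcal{I}$ that is not a root of unity yields an eigenvalue $\ov{\ld}$ of $U_\af$ with a nonzero eigenvector. Eigenvectors of a unitary operator belonging to distinct eigenvalues are orthogonal, and a separable Hilbert space contains at most countably many mutually orthogonal nonzero vectors, so $U_\af$ has at most countably many eigenvalues. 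Hence the non-roots-of-unity in $\mathcal{I}$ form a countable set, and therefore $\mathcal{I}$ is countable.

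I expect the one slightly delicate point to be the middle paragraph — specifically, handling the Fourier decomposition of $C$ with respect to the dual action and justifying that the vanishing of all Fourier coefficients forces $v \in A$. Everything else is either a direct computation with $u$ and $\af$ or a soft orthogonality argument in a separable Hilbert space.
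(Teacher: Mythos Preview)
Your proof is correct and rests on the same idea as the paper's: an inner $\gm_{\ld}$ produces an eigenvalue $\ov{\ld}$ for the unitary implementing $\af$ on the GNS space of~$\ta$, and separability of that Hilbert space bounds the eigenvalues to a countable set. The execution differs slightly. The paper does not split off roots of unity; from $v u v^* = \ld u$ it obtains $u^* v u = \ld v$, which at the level of Fourier coefficients $a_n = Q(v u^{-n})$ reads $\af^{-1}(a_n) = \ld a_n$ for \emph{every} $n \in \Z$, and then any nonzero $a_n$ (one exists by Ces\`aro summability of the Fourier series in the crossed product) already provides the required eigenvector. Your route instead forces $v$ itself into~$A$ and uses $\hat v$ as the eigenvector, at the cost of first discarding roots of unity; what you gain is that your Fourier-uniqueness argument (pair with states and use classical Fourier uniqueness on~$\T$) is a touch more elementary than invoking Ces\`aro convergence.
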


\begin{proof}
Let $\pi \colon A \to L (H)$ be the Gelfand-Naimark-Segal
representation associated with~$\ta$,
and let $\xi \in H$ be the associated cyclic vector.
Using the $\af$-invariance of~$\ta$,
we find that
\[
\big\langle \pi (\af (a)) \xi, \, \pi (\af (b)) \xi \big\rangle
 = \big\langle \pi (a) \xi, \, \pi (b) \xi \big\rangle
\]
for all $a, b \in A$,
from which it follows that there is a unique isometry
$s \in L (H)$
such that $s \pi (a) \xi = \pi (\af (a)) \xi$
for all $a \in A$.
Applying the same argument with $\af^{-1}$ in place of~$\af$,
we find that $s$ is unitary.

Let $\ld \in \T$.
We claim that if $\gm_{\ld}$ is inner,
then ${\overline{\ld}}$ is an eigenvalue of~$s$.
Since $H$ is separable,
$s$ has at most countably many eigenvalues,
and the lemma will follow.

To prove the claim,
suppose there is $v \in A \rtimes_{\af} \Z$
such that $\gm_{\ld} (b) = v b v^*$
for all $b \in A \rtimes_{\af} \Z$.
Let $Q \colon A \rtimes_{\af} \Z \to A$
be the standard conditional expectation.
Let $u \in A \rtimes_{\af} \Z$
be the canonical unitary of the crossed product,
so that $u a u^* = \af (a)$ for all $a \in A$.
Then
\begin{equation}\label{Eq_5Y14_Cj}
u^* v u
 = u^* (v u v^*) v
 = u^* (\ld u) v
 = \ld v.
\end{equation}
For $n \in \Z$ let $a_n = Q (v u^{-n}) \in A$
be the $n$-th coefficient of~$v$
in the crossed product,
so that
(see \cite[Theorem 8.2.2]{Dv}) $v$ is given by the
limit of the Ces\`{a}ro means:
\[
v = \lim_{n \to \infty}
   \sum_{k = - n}^n \left( 1 - \frac{| k |}{n} \right) a_n u^n.
\]
Applying~(\ref{Eq_5Y14_Cj}) and $u^* a_n u = \af^{-1} (a_n)$,
we get
\[
\ld v
 = u^* v u
 = \lim_{n \to \infty}
 \sum_{k = - n}^n \left( 1 - \frac{| k |}{n} \right) \af^{-1} (a_n) u^n.
\]
It follows that
for all $n \in \Z$ we have
\[
\ld a_n
 = Q (\ld v u^{-n})
 = \af^{-1} (a_n).
\]
Choose $n \in \Z$ such that $a_n \neq 0$.
We have
$\langle \pi (a_n) \xi, \, \pi (a_n) \xi \rangle = \ta (a_n^* a_n)$,
which is nonzero because $\ta$ is faithful.
Therefore $\pi (a_n) \xi \in H$ is nonzero and satisfies
$s^* \pi (a_n) \xi = \pi (\af^{-1} (a_n) ) \xi = \ld \pi (a_n) \xi$.
\end{proof}

Lemma~\ref{L_5Y08_Outer} applies to our setting as follows.
Let $X$ be a \cms, let $n \in \N$,
let $E$ be a locally trivial bundle over~$X$ with fiber~$M_n$,
let $h \colon X \to X$ be a \mh,
and let $\af \in \Aut (\Gm (E))$
be an automorphism which induces the map $h$ on~$\Prim (A)$.
Set $A = \Gm (E)$ and $B = A \rtimes_{\af} \Z$.
Let $\mu$ be an $h$-invariant Borel probability measure on~$X$.
For $x \in X$, since $E_x \cong M_n$,
we can let $\tr_x \colon E_x \to \C$ be the normalized trace.
Then there is a conditional expectation $P \colon A \to C (X)$
such that $P (a) (x) = \tr (a (x))$ for all $x \in X$.
Define $\ta \colon A \to \C$
by $\ta (a) = \int_X P (a) \, d \mu$
for $a \in A$.
Then $\ta$ is an $\af$-invariant tracial state.
Since $h$ is minimal,
$\mu$ has full support.
Therefore $\ta$
is faithful.
It follows from Lemma~\ref{L_5Y08_Outer}
that for all but countably many choices of $\zt \in \T$
in the proof of Corollary~\ref{C_5Y07_KbgEx},
the automorphism $\gm_{\zt}$ used there is outer.

We use \cite[Theorem~1]{Ws}
to see that the tensor product of any automorphism with an
outer automorphism is outer.
If $B$ is a unital Kirchberg algebra
satisfying the Universal Coefficient Theorem,
and $\af^n$ is outer for all $n \in \Z \smallsetminus \{ 0 \}$,
then the crossed product $B \rtimes_{\alpha} \Z$
is also a Kirchberg algebra.
(Pure infiniteness follows from \cite[Corollary 4.4]{JO}.)
By the Five Lemma,
$B \rtimes_{\alpha} \Z$ and $B^{\op} \rtimes_{\alpha} \Z$
have the same $K$-theory,
so they are isomorphic.

It seems very likely that suitable generalizations
of Theorem~12 in Section~V of~\cite{Ex2}
and Theorem~11 in Section~VI of~\cite{Ex2}
will show that,
in the proof of Corollary~\ref{C_5Y07_KbgEx},
the automorphism $\gm_{\zt}$ is outer
for all $\zt \not\in \exp (2 \pi i \Q)$.
The results of~\cite{Ex2} are stated for automorphisms of $C (X)$
for connected compact spaces~$X$,
and one would need to generalize them to
automorphisms of section algebras of locally trivial $M_n$-bundles
over such spaces.

\section{Remarks and questions}\label{Sec_Diff}

We collect here several remarks:
we show that the manifold $M$ used in
the proofs above does not itself
admit any minimal homeomorphisms,
and we describe a shorter construction
of examples,
with the disadvantages that it does not give unital algebras
and that we don't have proofs of some of the extra properties
of the examples.
We finish with several open questions.

\begin{rmk}\label{rmk:why-we-need-large-sphere}
We explain here why the manifold $M$ we started with
in the proofs of Theorem~\ref{T_5X17_AHEx}
and Theorem~\ref{T_5X17_KbgEx}
does not admit minimal homeomorphisms.

We first make the following purely algebraic claim:
if $a \in M_n (\R)$,
then there exists $k \in \{1, 2, \ldots, n + 1\}$
such that $\Tr (a^k) \geq 0$.
We are indebted to Ilya Tyomkin for providing us with the argument.
Assume for contradiction that $\Tr (a) < 0$
for $k = 1, 2, \ldots, n + 1$.
Define polynomials $e_m (t_1, t_2, \ldots, t_n)$
and $p_m (t_1, t_2, \ldots, t_n)$
of $n$ variables $t_1, t_2, \ldots, t_n$
as follows.
For $m = 0, 1, \ldots, n$,
take $e_m$ be the $m$-th elementary symmetric function
(\cite[page~19]{Mcd};
the formulas in~\cite{Mcd} are actually written
in terms of
formal infinite linear combinations of monomials in infinitely
many variables,
and we use the result of setting $t_{n + 1} = t_{n + 2} = \cdots = 0$).
For $m = 1, 2, \ldots, n$, set
$p_m (t_1, t_2, \ldots, t_n) = \sum_{k = 1}^n t_k^m$
(\cite[page~23]{Mcd}).
Newton's formula
(equation (2.11$'$) on page~23 of~\cite{Mcd})
states that
\begin{equation}\label{Eq_5Y07_Newton}
m e_m (t_1, t_2, \ldots, t_n)
 = \sum_{r = 1}^m (-1)^{r - 1} p_r (t_1, t_2, \ldots, t_n)
                e_{m - r} (t_1, t_2, \ldots, t_n)
\end{equation}
for $m = 1, 2, \ldots, n$.

Now let $\lambda_1, \lambda_2, \ldots, \lambda_n$
be the eigenvalues of~$a$,
counting multiplicity.
Then $\Tr (a^k) = p_k (\lambda_1, \lambda_2, \ldots, \lambda_n)$
and the characteristic polynomial of~$a$ is
\[
q (x) = \sum_{k = 0}^n (-1)^k
  e_k (\lambda_1, \lambda_2, \ldots, \lambda_n) x^{n - k}.
\]
Our assumption implies that
$p_k (\lambda_1, \lambda_2, \ldots, \lambda_n) < 0$
for $m = 1, 2, \ldots, n$.
An induction argument using~(\ref{Eq_5Y07_Newton})
shows that $(-1)^k e_k (\lambda_1, \lambda_2, \ldots, \lambda_n) > 0$
for $k = 0, 1, \ldots, n$.
Therefore
\[
\Tr (a q (a))
 = \sum_{k = 0}^n (-1)^k
  e_k (\lambda_1, \lambda_2, \ldots, \lambda_n) \Tr (a^{n - k + 1})
 < 0.
\]
But the Cayley-Hamilton theorem implies that
$q (a) = 0$,
so
$\Tr (a q (a)) = 0$,
a contradiction.
This proves the claim.

Now let $h \colon M \to M$ be a homeomorphism.
We claim that $h$ has a periodic point,
and therefore cannot be minimal.
The groups $H_1 (M; \Q)$ and $H_3 (M; \Q)$ are trivial.
Since $M$ has nonzero signature,
Lemma~\ref{L_4909_SameSig} implies that
$h$ is orientation preserving.
So $h$ acts as the identity on $H_0 (M; \Q)$ and $H_4 (M; \Q)$.
Now
$H_2 (M; \Q)$ is a finite dimensional vector space over~$\Q$.
Therefore,
by the claim above,
there exists some $k > 0$ such that
the map $h_* \colon H_2 (M; \Q) \to H_2 (M; \Q)$
satisfies $\Tr \big( (h_*)^k \big) \geq 0$.
It now follows from the Lefschetz fixed point theorem
that $h^k$ has a fixed point,
as claimed.
\end{rmk}

\begin{rmk}\label{R_5Y07_Alt}
We describe a different method to construct an example
as in Theorem~\ref{T_5X17_AHEx}.
The argument is shorter and does not rely
on the existence theorem of~\cite{FH} to produce
a minimal homeomorphism,
but has the disadvantage that the
resulting algebra is not unital.
In particular,
we do not get the detailed properties
given in Theorem~\ref{T_5X17_AHEx},
because the results needed to get them
are not known in the nonunital case.

Fix $n \in \N$ with $n \geq 15$.
Set $X = {\mathbb{T}}^{n}$.
Choose a uniquely ergodic minimal homeomorphism $h \colon X \to X$
which is homotopic to $\id_X$.
(For example,
choose $\te_1, \te_2, \ldots, \te_n \in \R$
such that $1, \te_1, \te_2, \ldots, \te_n$ are linearly independent
over~$\Q$,
and define
\[
h ( \zt_1, \zt_2, \ldots, \zt_n )
 = \big( e^{2 \pi i \theta_1} \zt_1, \, e^{2 \pi i \theta_2} \zt_2, \,
     \ldots, \, e^{2 \pi i \theta_n} \zt_n \big)
\]
for $\zt_1, \zt_2, \ldots, \zt_n \in \T$.

Let $D = M_{\Q}$ be the universal UHF~algebra.
As in the proof of Theorem~\ref{T_5X17_AHEx},
let ${\overline{E}}^*_D (-)$ be the (reduced) cohomology theory
which arises
as in \cite[Corollary 3.9]{paper:DadarlatPennigSpectra}
from the infinite loop structure
of the classifying space of $\Aut_0 (D \otimes K)$.
By statement~(ii) at the beginning of the proof of
\cite[Corollary 4.5]{paper:DadarlatPennigSpectra},
${\overline{E}}^1_D (X) \cong \bigoplus_{k \geq1} H^{2 k + 1} (X; \Q)$.
Let $F$ be a locally trivial continuous field
of \ca{s} over $X$
with fibers isomorphic to $M_{\Q} \otimes K$
and structure group $\mathrm{Aut}_0 (M_{\Q} \otimes K)$.
As in \cite[Corollary 3.9]{paper:DadarlatPennigSpectra},
$F$ is determined up to isomorphism of bundles
by its
class in $[F] \in {\overline{E}}^1_D (X)$:
\[
[F] = (\delta_1 (F), \delta_2 (F), \delta_3 (F), ...)
 \in H^3 (X; \Q) \oplus H^5 (X; \Q) \oplus H^7 (X; \Q) \oplus \cdots.
\]
By \cite[Theorem 3.4]{paper:DadarlatPennigBrauer},
the opposite bundle $F^{\op}$ satisfies
$\delta_k (F^{\op}) = (-1)^k \delta_k (F)$ for $k \in \N$.
Therefore the class of $F^{\op}$ is given by
\[
[F^{\op}]
 = \big(- \delta_1 (F), \, \delta_2 (F), \, - \delta_3 (F),
   \, \ldots \big).
\]

Let $\xi \in H^1 (\T; \Q)$ be the standard generator.
For $k = 1, 2, \ldots, n$
let $p_k \colon X \to \T$ be the projection on the $k$-th coordinate,
and define $\xi_k = p_k^* (\xi) \in H^1 (X; \Q)$.
It is known that
$H^{*} (X; \Q) \cong \bigwedge^{*} (\Q^n)$ as graded rings,
with $\xi_1, \xi_2, \ldots, \xi_n$ forming a basis of $H^1 (X; \Q)$.
Define $\et_1 \in H^3 (X; \Q)$, $\et_2 \in H^5 (X; \Q)$,
and $\et_3 \in H^7 (X; \Q)$
to be the cup products
\[
\et_1 = \xi_1 \smile \xi_2 \smile \xi_3,
\quad
\et_2 = \xi_4 \smile \xi_5 \smile \cdots \smile \xi_8,
\]
and
\[
\et_3 = \xi_9 \smile \xi_{10} \smile \cdots \smile \xi_{15}.
\]
Then $\et_3 \smile \et_5 \smile \et_7 \neq 0$.
Using the correspondence above,
choose a locally trivial continuous field $E$ over $X$
with fiber $M_{\Q} \otimes K$
such that
\[
\dt_1 (E) = \et_1,
\quad
\dt_2 (E) = \et_2,
\quad
\dt_3 (E) = \et_3,
\quad
\dt_{7} (E) = \et_1 \smile \et_2 \smile \et_3,
\]
and $\dt_k (E) = 0$ for all other values of~$k$.
Then
\[
\dt_1 (E^{\op}) = - \et_1,
\quad
\dt_2 (E^{\op}) = \et_2,
\quad
\dt_3 (E^{\op}) = - \et_3,
\quad
\dt_{7} (E^{\op}) = - \et_1 \smile \et_2 \smile \et_3,
\]
and $\dt_k (E^{\op}) = 0$ for all other values of~$k$.

Suppose $\Gm (E^{\op}) \cong \Gm (E)$.
Then,
by reasoning analogous to that in the proofs of
Theorem~\ref{T_5X17_AHEx} and Theorem~\ref{T_5X17_KbgEx},
there must be a homeomorphism
$g \colon X \to X$
such that
$g^* (\dt_k (E^{\op})) = \dt_k (E)$
for all $k \in \N$.
But $g^*$ is a morphism of graded rings
$g^* \colon H^* (X; \Q) \to H^* (X; \Q)$.
Thus,
if
\[
g^* (- \et_1) = \et_1,
\quad
g^* (\et_2) = \et_1,
\quad
{\mbox{and}}
\quad
g^* (- \et_3) = \et_3,
\]
then
\[
g^* ( - \et_1 \smile \et_2 \smile \et_3 )
 = - \et_1 \smile \et_2 \smile \et_3
 \neq \et_1 \smile \et_2 \smile \et_3.
\]
So $\Gm (E^{\op}) \not\cong \Gm (E)$.

Presumably $\Gm (E)$ has no tracial states.
If we want to use $\Gm (E)$ in place of $A_0$
in the proof of Theorem~\ref{T_5X17_AHEx},
we need nonunital analogs of the theorems cited in that proof,
many of which are not known.

One may also use the space $X = S^3 \times S^5 \times S^7$,
taking $\et_1 \in H^3 (X; \Q)$, $\et_2 \in H^5 (X; \Q)$,
and $\et_3 \in H^7 (X; \Q)$
to be the classes coming from generators of
$H^3 (S^3; \Q)$, $H^5 (S^5; \Q)$,
and $H^7 (S^7; \Q)$
except that
for the existence of minimal homeomorphisms
one appeals to \cite{FH}
as in the proof of
Theorem~\ref{T_5X17_AHEx} and Theorem~\ref{T_5X17_KbgEx}.
\end{rmk}

We conclude with a few natural questions,
which we have not seriously investigated.

\begin{qst}\label{Q_5Y07_opKK}
\
\begin{enumerate}
\item\label{Q_5Y07_opKK_Ours}
Are the actions in
Theorem~\ref{T_5X17_AHEx} and Theorem~\ref{T_5X17_KbgEx}
$KK^{\T}$-equivalent to their opposite actions?
\item\label{Q_5Y07_opKK_Any}
Is there any circle action
on an algebra
as in Theorem~\ref{T_5X17_AHEx}
or Theorem~\ref{T_5X17_KbgEx}
which is not $KK^{\T}$-equivalent to
its opposite action?
\item\label{Q_5Y07_opKK_BtMy}
What happens to the Bentmann-Meyer
invariant (\cite{bentmann-meyer}) of an action of~$\T$
when one passes
to the opposite algebra
but keeps the same formula for the action?
\end{enumerate}
\end{qst}

\begin{qst}\label{Q_5Y07_Discrete}
What happens when we restrict the actions
of Theorem~\ref{T_5X17_AHEx} and Theorem~\ref{T_5X17_KbgEx}
to finite subgroups of~$\T$?
What happens if we consider these actions
as actions of $\T$ but with the discrete topology?
\end{qst}
%\bibliographystyle{abbrv}
%\bibliography{operator16}
%\end{document}
\bibliographystyle{abbrv}

%\bibliography{refs}

\end{document}